\documentclass[10pt,a4paper]{article}

\usepackage[T1]{fontenc}
\usepackage{amssymb}
\usepackage{amsthm}
\usepackage{mathtools}
\usepackage{enumitem}
\usepackage{aliascnt}
\usepackage{hyperref}
\numberwithin{equation}{section}
\newtheorem{theorem}{Theorem}[section]

\newaliascnt{proposition}{theorem}
\newtheorem{proposition}[proposition]{Proposition}
\aliascntresetthe{proposition}

\newaliascnt{lemma}{theorem}
\newtheorem{lemma}[lemma]{Lemma}
\aliascntresetthe{lemma}

\newaliascnt{corollary}{theorem}

\aliascntresetthe{corollary}

\newaliascnt{conjecture}{theorem}
\newtheorem{conjecture}[conjecture]{Conjecture}
\aliascntresetthe{conjecture}

\theoremstyle{definition}
\newaliascnt{claim}{theorem}
\newtheorem{claim}[claim]{Claim}
\aliascntresetthe{claim}

\usepackage[noabbrev,capitalize]{cleveref}
\crefname{equation}{}{}

\crefname{theorem}{Theorem}{Theorems}
\crefname{proposition}{Proposition}{Propositions}
\crefname{lemma}{Lemma}{Lemmas}
\crefname{corollary}{Corollary}{Corollaries}
\crefname{conjecture}{Conjecture}{Conjectures}
\crefname{claim}{Claim}{Claims}

\newcommand{\F}{\mathcal F}
\newcommand{\G}{\mathcal G}
\newcommand{\X}{\mathcal X}
\newcommand{\M}{\mathcal M}
\newcommand{\Rcal}{\mathcal R}

\newcommand{\cl}{\operatorname{cl}}

\title{Extremal Families for the Erd\H{o}s--Kleitman Problem: The Missing Constructions}
\author{
    Cheng Chi\thanks{School of Mathematical Sciences, Shanghai Jiao Tong University, 800 Dongchuan Road, Shanghai 200240, China.
    Email: chengchi@sjtu.edu.cn.
    Supported by National Key R\&D Program of China under grant No. 2022YFA1006400 and National Natural Science Foundation of China No. 12571376.
    } \qquad Yan Wang\thanks{School of Mathematical Sciences, Shanghai Jiao Tong University, 800 Dongchuan Road, Shanghai 200240, China.
    Email: yan.w@sjtu.edu.cn.
    Supported by National Key R\&D Program of China under grant No. 2022YFA1006400, National Natural Science Foundation of China under grant No. 12571376.
    } }
\date{}

\begin{document}
\maketitle

\begin{abstract}
        For integers $n\ge s\ge2$, let $e(n,s)$ be the maximum size of a family $\F\subseteq2^{[n]}$ with no $s$ pairwise disjoint members.
        The problem of determining $e(n,s)$, now called the Erd\H{o}s--Kleitman problem, is closely related to the well-known Erd\H{o}s matching problem.
        Frankl and Kupavskii posed a meta-conjecture predicting that the maximum is always attained by a weighted family.

        Fix $m\ge3$, write $n=ms+c$ with $0\le c<s$, and set $\ell=s-c$.
        For $0\le k\le m$, let $a_k=ms-kc-1$.
        For $A\in\binom{[n]}{a_k}$, define
        \[
                \mathcal H^k(m,s,\ell;A)\coloneqq
                \{F\subseteq[n]: k|F|+|F\cap A|\ge m(k+1)\}.
        \]
        This defines a unified class of weighted families with matching number less than $s$.
        Among these families, $\mathcal H^0$, $\mathcal H^1$, and $\mathcal H^m$ were previously known to be extremal in different ranges of $c$.

        We show that for $1\le k\le m-1$, all families $\mathcal H^k$ are uniquely extremal in some ranges of $c$.
        More precisely, we prove that for every $m\ge3$ and every $1\le k\le m-1$, there exist constants $\alpha=\alpha(m,k)>0$, $\beta=\beta(m,k)>0$ and an integer $s_0=s_0(m,k)$ such that, for all integers $s\ge s_0$ and all integers $c$ with $0\le c<s$, the only extremal families for $e(n,s)$ are the families $\mathcal H^k(m,s,\ell;A)$ with $A\in\binom{[n]}{a_k}$ whenever $\beta s^{(k-1)/k}\le c\le \alpha s^{k/(k+1)}$.
        In particular, this result determines an infinite number of new extremal families for the Erd\H{o}s--Kleitman problem and verifies the Frankl--Kupavskii meta-conjecture in these ranges.
        This also provides a quantitative extension of the result of Kupavskii and Sokolov on the extremality of $\mathcal H^1$.
\end{abstract}

\section{Introduction}\label{sec:intro}

Let $[n]\coloneqq\{1,2,\ldots,n\}$.
For a set $X$ and an integer $k$, write $\binom Xk$ for the family of all $k$-element subsets of $X$, and write $\binom X{\ge k}\coloneqq\bigcup_{r\ge k}\binom Xr$ and $\binom X{<k}\coloneqq\bigcup_{0\le r<k}\binom Xr$.
A \emph{matching} in a family is a collection of pairwise disjoint sets.
An $s$-\emph{matching} is a matching of size $s$.
Given a family $\F$, its \emph{matching number} $\nu(\F)$ is the size of a largest matching contained in $\F$.

The Erd\H{o}s--Kleitman problem asks for the maximum size of a family of subsets of $[n]$ with no $s$ pairwise disjoint members.
Equivalently, for integers $n\ge s\ge2$, define
\[
        e(n,s)\coloneqq
        \max\{ |\F|:\F\subseteq2^{[n]}\text{ and }\nu(\F)<s\}.
\]
The Erd\H{o}s--Kleitman can be seen as the non-uniform counterpart of the well-known Erd\H{o}s Matching Conjecture \cite{ErdosEMC}.
For $n\ge rs$, the conjecture asserts that
\[
        \max\left\{|\mathcal G|:\mathcal G\subseteq\binom{[n]}r
        \text{ and }\nu(\mathcal G)<s\right\}
        =
        \max\left\{
        \binom{rs-1}r,
        \binom nr-\binom{n-s+1}r
        \right\}.
\]
The two terms on the right arise, respectively, from taking all $r$-sets on an $(rs-1)$-element ground set and from taking all $r$-sets that meet a fixed $(s-1)$-set.
For $r=2$, it is the theorem of Erd\H{o}s and Gallai \cite{ErdosGallai}, while for $s=2$, it reduces to the Erd\H{o}s--Ko--Rado theorem \cite{ErdosKoRado}.
The conjecture is known for $r=3$ by Frankl \cite{FranklDAM}, following earlier work of \L{}uczak and Mieczkowska \cite{LuczakMieczkowska}.
For arbitrary $r$, Frankl and Kupavskii \cite{FranklKupavskiiConcentration} substantially extended the range in which the family of all $r$-sets meeting a fixed $(s-1)$-set is extremal. They also established a Hilton--Milner-type stability theorem \cite{FranklKupavskiiTwoProblems}.
More recently, Frankl, Lu, Ma and Wu \cite{FranklLuMaWu} proved the conjecture for $r=4$ in the range $n\ge5(s-1)$ when $n$ is sufficiently large, using a stability theorem for shifted $4$-uniform families.
In a subsequent preprint, Hou, Hu and Liu \cite{HouHuLiuEMC} proved the conjecture for $r=4$ and $n\ge4s$ whenever $s\ge6962$.
Nevertheless, the conjecture remains open in general for every $r\ge 4$.
Structural theories for perfect and almost-perfect matchings in dense uniform hypergraphs likewise emphasize the role of space and divisibility barriers; see Keevash and Mycroft \cite{KeevashMycroft} and the survey of Keevash \cite{KeevashSurvey}.
Matchings also arise naturally through transversals.
Keevash, Pokrovskiy, Sudakov and Yepremyan \cite{KeevashPokrovskiySudakovYepremyan} proved that every Steiner triple system of order $n$ has a matching of size $n/3-O(\log n/\log\log n)$, and Montgomery \cite{MontgomeryRBS} later improved this bound to at least $(n-4)/3$ for all sufficiently large $n$.
For a systematic account of the correspondence between Latin transversals and matchings in $3$-partite $3$-uniform hypergraphs, see Montgomery \cite{MontgomerySurvey}.

More broadly, the Erd\H{o}s--Kleitman problem belongs to the study of extremal families in the Boolean cube.
Fundamental directions in this area include shadow and stability problems \cite{KeevashShadows}, vertex isoperimetry in the cube \cite{KeevashLongCube}, and restricted intersection problems for uniform and non-uniform set systems \cite{AnsteeKeevash,KeevashSudakovCross}, together with vector-valued analogues \cite{KeevashLongIntersections}.
Related forbidden-configuration problems include set systems without simplices or clusters \cite{KeevashMubayiSimplex} and sunflower problems for set systems of bounded dimension \cite{FoxPachSukSunflowers}.
Although the forbidden configurations differ, all these problems seek quantitative bounds on families satisfying prescribed restrictions, and several also determine exact extremal constructions or establish stability.

We now return to the Erd\H{o}s--Kleitman problem.
Kleitman determined $e(n,s)$ when $n\equiv-1\pmod s$ or $n\equiv0\pmod s$ \cite{Kleitman}.
Further direct results for non-uniform families with bounded matching number include inequalities of Frankl and Kupavskii \cite{FranklKupavskiiInequalities} and their exact determination of the remaining residue class when $s=3$ and of the class $n\equiv2\pmod4$ when $s=4$ \cite{FranklKupavskiiSmallMatchings}.
Beyond these results, the general answer remains unknown and may depend heavily on $n\pmod s$.
Hence, throughout the paper, write $n=ms+c=(m+1)s-\ell$, where $0\le c<s$ and $\ell=s-c$.

All known exact values of $e(n,s)$ are attained by weighted constructions.
Let $w\colon [n]\to\mathbb R_{\ge0}$.
If $\sum_{i=1}^n w(i)<s$, then the family
\[
        \mathcal F(w)\coloneqq
        \left\{F\subseteq[n]\colon\sum_{x\in F}w(x)\ge1\right\}
\]
has matching number less than $s$.
Indeed, an $s$-matching $F_1,\ldots,F_s$ in $\mathcal F(w)$ would give
\[
        s>\sum_{i=1}^n w(i)\ge\sum_{r=1}^s\sum_{x\in F_r}w(x)\ge s,
\]
a contradiction.
Frankl and Kupavskii \cite{FKnonuniform} proposed the following meta-conjecture.

\begin{conjecture}[Frankl--Kupavskii \cite{FKnonuniform}]\label{conj:FK-meta}
        For any integers $n\ge s\ge2$, the maximum in the definition of $e(n,s)$ is attained by a family $\mathcal F(w)$ for some $w\colon [n]\to\mathbb R_{\ge0}$ satisfying $\sum_{i=1}^n w(i)<s$.
\end{conjecture}

One classical candidate is
\[
        \mathcal P(m,s,\ell)
        \coloneqq
        \{F\subseteq[n]\colon |F|+|F\cap [\ell-1]|\ge m+1\}.
\]
Indeed, if $w_{\mathcal P}(i)=2/(m+1)$ for $i\le\ell-1$ and $w_{\mathcal P}(i)=1/(m+1)$ for $i\ge\ell$, then $\mathcal P(m,s,\ell)=\mathcal F(w_{\mathcal P})$ and $\sum_i w_{\mathcal P}(i)=s-1/(m+1)<s$.
Frankl and Kupavskii \cite{FKnonuniform} also posed the following conjecture.
\begin{conjecture}[Frankl--Kupavskii \cite{FKnonuniform}]\label{pconj}
        Suppose that $s\ge2$, $m\ge1$ and $n=s(m+1)-\ell$ for some $\ell$ with $1\le\ell\le \lceil s/2\rceil$.
        Then
        \[
                e(n,s)=|\mathcal P(m,s,\ell)|.
        \]
\end{conjecture}
They also verified \cref{pconj} in several ranges.
Kupavskii and Sokolov \cite{KupavskiiSokolovMore} later proved that, for every fixed $m$ and every $\varepsilon>0$, the conjecture holds when $s$ is sufficiently large and $\ell\le(1/2-\varepsilon)s$.
In \cite{ChiWangK1}, the authors of this paper showed that, for every fixed $m\ge3$ and all sufficiently large $s$, $\mathcal P(m,s,\ell)$ is, up to relabelling, the unique extremal family when $1\le\ell\le ((m+1)/(2m+1)-o(1))s$, confirming \cref{pconj}.

The present paper studies another class of weighted constructions.
For $0\le k\le m$, let $a_k=ms-kc-1$.
For $A\in\binom{[n]}{a_k}$, define
\begin{equation}\label{eq:Hfamily-def}
        \mathcal H^k(m,s,\ell;A)
        \coloneqq
        \{F\subseteq[n]:k|F|+|F\cap A|\ge m(k+1)\}.
\end{equation}
We also write $\mathcal H^k(m,s,\ell)\coloneqq\mathcal H^k(m,s,\ell;[a_k])$.
Clearly, $\mathcal H^k(m,s,\ell;A)$ is a relabelling of $\mathcal H^k(m,s,\ell)$.
Define $w_A$ by $w_A(i)=\frac1m$ for $i\in A$ and $w_A(i)=\frac{k}{m(k+1)}$ for $i\notin A$.
By this definition, $\mathcal H^k(m,s,\ell;A)=\mathcal F(w_A)$ and $\sum_i w_A(i)=s-1/(m(k+1))<s$.
Hence, $\nu(\mathcal H^k(m,s,\ell;A))<s$.

The case for small $c$ was studied by Kupavskii and Sokolov \cite{KupavskiiSokolovOtherEnd}.
They showed that for every fixed $m$, among shifted families, $\mathcal H^0(m,s,\ell)$ and $\mathcal H^1(m,s,\ell)$\footnote{In previous papers, $\mathcal H^0(m,s,\ell)$ and $\mathcal H^1(m,s,\ell)$ are often denoted by $\mathcal W(m,s,\ell)$ and $\mathcal Q(m,s,\ell)$, respectively.} are the only extremal families for $e(sm+1,s)$ when $s$ is sufficiently large.  For every fixed $m$ and $c\ge2$, there exists $s_0=s_0(m,c)$ such that $\mathcal H^1(m,s,\ell)$ is the unique shifted extremal family for $e(sm+c,s)$ when $s\ge s_0$.
They also posed the following conjecture:
\begin{equation}\label{equ:KS-conjecture}
        e(n,s)=
        \max\left\{
        |\mathcal H^0(m,s,\ell)|,
        |\mathcal H^1(m,s,\ell)|,
        |\mathcal H^m(m,s,\ell)|,
        |\mathcal P(m,s,\ell)|
        \right\}.
\end{equation}
In \cite{KupavskiiSokolovComplete}, they prove the conjecture when $n \le 3s$, which provides evidence for this conjecture.
Recently, in \cite{ChiWangK2}, the authors of this paper proved that for every $m\ge3$, there are constants $\beta_m,\delta_m>0$ such that $\mathcal H^m(m,s,\ell;A)$\footnote{In previous papers, $\mathcal H^m(m,s,\ell;A)$ is often denoted by $\mathcal P'(m,s,\ell;A)$.} is, up to relabelling, the unique extremal family for $e(sm+c,s)$ when $s$ is sufficiently large and $\beta_ms^{(m-1)/m}\le c\le\delta_ms$.
In the same paper \cite{ChiWangK2}, the authors also showed that $|\mathcal H^{m-1}(m,s,\ell)|$ is larger than the sizes of all four candidates in \eqref{equ:KS-conjecture} for some values of $c$, thus disproving this conjecture of Kupavskii and Sokolov.
Hence, the families $\mathcal H^k(m,s,\ell)$, for $1\le k\le m-1$, are natural extremal candidates for $e(n,s)$.

The main contribution of this paper is to confirm the extremality of these families.

\begin{theorem}\label{thm:main-result-2}
        For every $m\ge3$ and every $1\le k\le m-1$, there exist constants $\alpha=\alpha(m,k)>0$, $\beta=\beta(m,k)>0$ and an integer $s_0=s_0(m,k)$ such that the following holds for all integers $s\ge s_0$ and all integers $c$ with $0\le c<s$.
        Let $n=ms+c$ and $\ell=s-c$.
        If
        \[
                \beta s^{\frac{k-1}{k}}
                \le c\le
                \alpha s^{\frac k{k+1}},
        \]
        then $e(n,s)=|\mathcal H^k(m,s,\ell)|$.
        Moreover, if $\F\subseteq2^{[n]}$ satisfies $\nu(\F)<s$ and $|\F|=e(n,s)$, then $\F=\mathcal H^k(m,s,\ell;A)$ for some $A\in\binom{[n]}{a_k}$, where $a_k=ms-kc-1$.

\end{theorem}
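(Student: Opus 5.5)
We would follow the strategy of \cite{ChiWangK2} for $\mathcal H^m$, adapted to general $k$. First we reduce to shifted (left-compressed) families, since shifting preserves $|\F|$ and does not increase $\nu(\F)$. Take $\F$ extremal, so $|\F|\ge|\mathcal H^k(m,s,\ell)|$ and $\F$ is an up-set with $\nu(\F)<s$; we may assume $\F$ is shifted. Uniqueness is handled at the end by an unshifting argument: we show that any non-shifted extremal family that shifts to $\mathcal H^k(m,s,\ell)$ must itself be some $\mathcal H^k(m,s,\ell;A)$. The key point is that every set of $\mathcal H^k$ of size $m$ lies inside $A$, so the shift preimage is determined by a single $a_k$-set.

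The main body is a level-by-level comparison of $\F$ with $\mathcal H^k$. Since $n=ms+c$, sets of size at least $m+1$ are essentially forced, and the loss of $\F$ relative to $2^{[n]}$ is concentrated on layers $m, m-1,\dots$. For $\mathcal H^k$, a set of size $m-j$ belongs to the family iff $|F\cap A|\ge m+(j+1)k\cdot\frac{?}{}$. More precisely, the condition is $|F\cap A|\ge m(k+1)-k|F|$. Thus the $m$-sets must lie in $A$, the $(m+1)$-sets must meet $A$ in at least $m-k$ elements, and so on. We would partition $\F$ by the number $|F\cap[a_k]|$ after shifting and run a Kleitman-type averaging: consider random partitions of $[n]$ into $s$ blocks, or a random permutation cut into consecutive pieces of sizes $m$ and $m+1$ ($c$ blocks of size $m+1$ and $s-c$ of size $m$). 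Since at most $s-1$ blocks lie in $\F$, we get weighted inequalities among the layer densities $\F^{(m)},\F^{(m+1)},\dots,\F^{(m+k)}$. Using shiftedness, $\nu(\F^{(m)})<s$ together with the Erd\H{o}s matching / Frankl-type bounds (the $r$-uniform bound $\binom{rs-1}{r}$-type concentration) forces $\F^{(m)}$ to be contained, up to $o$-errors, in $\binom{A'}{m}$ for a set $A'$ with $|A'|\approx ms-kc$. Iterating a stability statement across layers $m+1,\dots,m+k$ then shows that each layer $\F^{(m+j)}$ misses roughly $\sum_{i}\binom{a_k}{m+j-i}\binom{n-a_k}{i}$ with $i>j\cdot$(appropriate threshold), matching $\mathcal H^k$.

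The window $\beta s^{(k-1)/k}\le c\le\alpha s^{k/(k+1)}$ is where $\mathcal H^k$ beats its neighbours $\mathcal H^{k-1}$ and $\mathcal H^{k+1}$ (and $\mathcal P$). We would compute $|\mathcal H^j|$ asymptotically. The trade-off is that removing $c$ more elements from $A$ costs about $c\binom{ms}{m-1}$-type terms at layer $m$, while gaining sets with $k$ or more outside-$A$ elements at higher layers costs terms of order $c^{k+1}/s^{k}$ relative to the main term. Balancing $c^{k}\sim s^{k-1}$ and $c^{k+1}\sim s^{k}$ gives exactly the stated exponents. These computations also supply the slack needed to absorb error terms in the stability argument.

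The main obstacle is the exact (not asymptotic) step. After stability shows that $\F$ is close to some $\mathcal H^j$ with $A$ of size $a_k$, we must show that any deviation strictly loses. For this we would fix the near-optimal set $A$, classify the missing and extra sets by their intersection profile $(|F\cap A|,|F\setminus A|)$, and prove a local injection or weighted-matching argument. Each extra set $F\notin\mathcal H^k$ (with $k|F|+|F\cap A|<m(k+1)$) must be paired with many sets of $\mathcal H^k$ missing from $\F$; otherwise we could build an $s$-matching using $F$ together with blocks packed from $A$ and the complement. To handle this, we would first try the greedy construction of $s-1$ disjoint sets in $\mathcal H^k$ avoiding $F$, tight by the weight count $s-1/(m(k+1))$, and count the resulting obstructions via a Hall-type/Kruskal--Katona estimate. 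Making these estimates hold uniformly down to $c\approx\beta s^{(k-1)/k}$ is the delicate part.
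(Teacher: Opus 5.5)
Your outline gets the heuristics right: the window exponents come from balancing $c^k$ against $s^{k-1}$ and $c^{k+1}$ against $s^k$, and an extra set of positive defect forces every other block of a would-be $s$-partition to have defect exactly $0$. But it contains no argument for the exact statement, and the steps you do specify would not deliver one.

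\textbf{The averaging and stability steps do not work.} The Kleitman averaging with $c$ blocks of size $m+1$ and $s-c$ of size $m$ is far from tight at the candidate. For a uniformly random such partition, the expected number of blocks lying in $\mathcal H^k(m,s,\ell)$ is about $s-(k+1)c$, not $s-1$. So the inequality has slack of order $c$ and cannot see the structure. Likewise, $\nu(\F^{(m)})<s$ on $ms+c$ points is compatible with a clique on $ms-1$ vertices. Nothing in the $m$-layer alone yields a center of size $\approx ms-kc$. Stability with $o(\binom nm)$ errors is too coarse even to distinguish $\mathcal H^k$ from the complete $m$-layer, since the whole $m$-layer deficit of $\mathcal H^k$ is only $\Theta(cs^{m-1})=o(s^m)$.

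\textbf{The exact step is missing its key idea.} Everything therefore rests on the exact step, which you leave as ``local injection or weighted matching, greedy, Hall/Kruskal--Katona''. The missing idea is a quantitative completion argument. For a maximum matching $\mathcal J$ of extra sets one has $|\X(A)|=O(\nu(\X(A))s^{m-1})$. One then tries to extend $\mathcal J$ to an $s$-matching using two ingredients:
\begin{itemize}
\item a perfect matching of $\F_m\cap\binom Am$ on $N\setminus T$ for a $\lambda$-set $T$, supplied by the robust Daykin--H\"aggkvist statement \cref{lem:dense-defect};
\item about $c$ blocks, each made of $m-k$ vertices of $T$ and $k+1$ or $k+2$ leftover vertices, handled by \cref{lem:p-ordered-local-blocker}.
\end{itemize}
When this fails, double counting over $T$ gives at least a constant times $(h+1)s^{m-k}c^k$ missing canonical sets, with $h+1=\Omega(\nu)$. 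The lower bound $c\ge\beta s^{(k-1)/k}$ is precisely what makes this count dominate $\nu s^{m-1}$. A greedy construction cannot play this role, because the completion must be an exact partition with zero total slack. Hall or Kruskal--Katona bounds produce no count of order $s^{m-k}c^k$.

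\textbf{Sets below size $m$ are never ruled out.} An extremal up-set might contain a $q$-set with $q<m$, while $\mathcal H^k$ has none. The paper needs a separate step for this case. It reserves that set $B$ and applies the up-set result on $[n]\setminus B$ with parameters $(s-1,c+m-q)$, which must remain in the window. It then uses \cref{lem:main-low-layer-gap} to get a loss of order $s^{m-1}$.

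\textbf{Uniqueness by unshifting is only asserted.} Suppose a shift $S_{ij}$ ($j\to i$) maps $\G$ to $\mathcal H^k(m,s,\ell;A)$ with $i\in A$ and $j\notin A$. Then for each $G$ with $i,j\notin G$ and $G\cup\{i\}$ of defect $0$, the family $\G$ contains exactly one of $G\cup\{i\}$ and $G\cup\{j\}$, a priori chosen independently. Excluding mixed choices requires constructing an $s$-matching: one set of each kind plus a partition of the rest into defect-$0$ blocks. The fact that the $m$-sets of $\mathcal H^k$ lie in $A$ does not settle this.

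For comparison, the paper avoids shifting altogether. It fixes $A$ maximizing $g(A)=|\F_m\cap\binom Am|$ and disposes of low-degree center vertices by the center switch (\cref{cl:defect-clean-center}). Uniqueness then follows directly from $|\X(A)|<|\M(A)|$ unless both are empty.
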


The two exponents in the hypothesis describe the transition scales between consecutive weighted candidates.
The lower scale $s^{(k-1)/k}$ is the upper transition scale associated with $\mathcal H^{k-1}$, while the upper scale $s^{k/(k+1)}$ is the corresponding transition scale from $\mathcal H^k$ toward $\mathcal H^{k+1}$.
Since $(k-1)/k<k/(k+1)$, the interval in \cref{thm:main-result-2} is nonempty for all sufficiently large $s$.
Thus, the theorem places every intermediate family $\mathcal H^k$, $1\le k\le m-1$, in its own polynomial window and supplies infinitely many new exact values of $e(n,s)$.
The uniqueness assertion also identifies the full equality structure in these windows, rather than only the extremal value.

Together with the known results \cite{ChiWangK1,ChiWangK2,KupavskiiSokolovComplete,KupavskiiSokolovOtherEnd,KupavskiiSokolovMore}, \cref{thm:main-result-2} provides further evidence for \cref{conj:FK-meta}, since in all these cases the extremal value is attained by a weighted family.

For a family $\F\subseteq2^{[n]}$, its \emph{upward closure} $\cl(\F)$ is $\{G\subseteq[n]:\text{there exists }F\in\F\text{ with }F\subseteq G\}$.
We say that $\F$ is \emph{upward closed} if $\cl(\F)=\F$.
The proof of \cref{thm:main-result-2} proceeds through a reduction to upward closed families with no members of size less than $m$.
This is the natural setting because $\mathcal H^k(m,s,\ell;A)$ itself contains no set of size below $m$.

We now sketch the proof of \cref{thm:main-result-2}.
We first show that an upward closed family with $\nu(\F)<s$ can have at most $|\mathcal H^k(m,s,\ell)|$ elements (see \cref{thm:reduced-all}).
Let $\F_m$ be the $m$-th layer of $\F$.
We choose an $(ms-kc-1)$-set $A$ that maximizes $g(A)=|\F_m\cap\binom Am|$.
We then compare the surplus $\F\setminus\mathcal H^k(m,s,\ell;A)$ with the deficit $\mathcal H^k(m,s,\ell;A)\setminus\F$.
An analysis of matchings among the surplus sets shows that the deficit exceeds the surplus unless the surplus is concentrated around a small collection of vertices of $A$.
In this exceptional configuration, replacing those vertices by suitable vertices of $R=[n]\setminus A$ produces another set $A'$ with $g(A')>g(A)$, contradicting the choice of $A$.
Hence, the exceptional configuration cannot occur, and the deficit exceeds the surplus unless both are empty. Consequently, $\F=\mathcal H^k(m,s,\ell;A)$ in the equality case.

Now we reduce the problem to upward closed families with no member of size less than $m$.
Starting from $\F$ with $\nu(\F)<s$, we replace $\emptyset$, if necessary, by a suitable singleton and then take the upward closure.
These operations do not decrease the size of the family or increase its matching number.
If the resulting family has no member of size less than $m$, then \cref{thm:reduced-all} applies.
Otherwise, fix a $q$-set $B$ with $q<m$.
Reserving $B$ as one member of a potential matching reduces the problem to the ground set $[n]\setminus B$ and parameters $s'=s-1$ and $c'=c+m-q$.
\cref{lem:main-low-layer-gap} shows that the number of missing sets of the corresponding shifted candidate, even after all layers below $m$ are included, exceeds that of $\mathcal H^k(m,s,\ell)$ by a term of order $s^{m-1}$.
Consequently, if the upward closure contains a set of size less than $m$, then $|\F|<|\mathcal H^k(m,s,\ell)|$.

Theorem~\ref{thm:main-result-2} suggests that, after including all the families $\mathcal H^k$, the known weighted constructions may form a complete list of extremal families.
We therefore propose the following conjecture.

\begin{conjecture}\label{conj:complete-classification}
        For every $n$ and $s$ with $n=sm+c=s(m+1)-\ell$, we have
        \[
                e(n,s)=\max\left\{|\mathcal H^0(m,s,\ell)|, |\mathcal H^1(m,s,\ell)|,\ldots,|\mathcal H^m(m,s,\ell)|,|\mathcal P(m,s,\ell)|\right\}.
        \]
        Moreover, every extremal family is isomorphic to one of the
        candidates attaining this maximum.
\end{conjecture}

This paper is organized as follows.
In \cref{sec:prelim}, we collect the auxiliary notation and results of matchings used throughout the proof.
In \cref{sec:proof-main}, we reduce the problem to upward-closed families and prove \cref{thm:main-result-2}, assuming \cref{thm:reduced-all}.
In \cref{sec:proof-reduced}, we show the extremality of upward closed families and thus complete the proof of \cref{thm:reduced-all}.

\section{Preliminaries}\label{sec:prelim}

Given a family $\mathcal A\subseteq 2^{[n]}$ and an integer $r\ge0$, write $\mathcal A_r$ for the $r$-layer of $\mathcal A$, namely, $\mathcal A_r=\mathcal A\cap \binom{[n]}{r}$.
In particular, let $\mathcal H^k_r(m,s,\ell;A)$ denote the
$r$-layer of $\mathcal H^k(m,s,\ell;A)$, that is,
\begin{equation}\label{eq:actual-layer-def}
        \mathcal H^k_r(m,s,\ell;A)=
        \left\{F\in\binom{[n]}r:kr+|F\cap A|\ge m(k+1) \right\}.
\end{equation}
For $1\le k\le m-1$, let $L\coloneqq\left\lfloor\frac{m-1}{k}\right\rfloor$.
For $0\le j\le L$, let $\tau_j\coloneqq m-kj$. Then
\[
        \mathcal H^k_{m+j}(m,s,\ell;A)
        =
        \left\{F\in\binom{[n]}{m+j}\colon |F\cap A|\ge\tau_j\right\}.
\]
Moreover, for $i\ge m+L+1$, the $i$-layer of
$\mathcal H^k(m,s,\ell;A)$ is full, that is,
$\mathcal H^k_i(m,s,\ell;A)=\binom{[n]}i$.

Given a finite set $V$ and a family $\mathcal A\subseteq2^V$, write $d_{\mathcal A}(i)$ for the {\it degree} of $i$, namely, the number of sets that contain $i$ in $\mathcal A$.
Let $\delta(\mathcal A)=\min\{d_{\mathcal A}(i)\colon i\in V\}$ be the \emph{minimum degree} of $\mathcal A$.
Also, let $\Delta(\mathcal A)=\max\{d_{\mathcal A}(i)\colon i\in V\}$ denote the \emph{maximum degree} of $\mathcal A$.
Given a family $\mathcal A\subseteq2^V$, a matching in $\mathcal A$ is called a \emph{perfect matching} if its union is $V$.
For a family $\mathcal A\subseteq2^V$ and $W\subseteq V$, write $\mathcal A[W]=\{E\in\mathcal A:E\subseteq W\}$ for the subfamily induced by $W$.

Define
\begin{equation}\label{eq:defect-proof-E-B}
        E_k(s,c)=s^{m-2}+cs^{m-2}+
        \sum_{j=1}^{L}\sum_{x=0}^{\tau_j-1}s^x c^{m+j-x-1} \text{ and }
        B_k(s,c)=s^{m-1}+E_k(s,c).
\end{equation}
Throughout the proof, we will use $E_k(s,c)$ and $B_k(s,c)$ to control lower-order error terms.

We shall use the following standard form of Vandermonde's identity; see, for example, \cite{StanleyEC1}.

\begin{lemma}\label{lem:vandermonde}
        Let $x,y,t$ be nonnegative integers.
        We have $\sum_{b=0}^{t}\binom xb\binom y{t-b} = \binom{x+y}{t}$.
        In particular, for every integer $m\ge1$, if $x+y=N$, then $\sum_{b=0}^{m-1}\binom xb\binom y{m-b} = \binom Nm-\binom xm$.
\end{lemma}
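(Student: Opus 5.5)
The statement just before this point is Vandermonde's identity (\cref{lem:vandermonde}), which is standard. I would prove it by a double counting argument.

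Fix disjoint sets $X$ and $Y$ with $|X|=x$ and $|Y|=y$. Count the $t$-subsets $T$ of $X\cup Y$ in two ways.
\begin{itemize}[leftmargin=*]
\item Counted directly, there are $\binom{x+y}{t}$ of them.
\item Alternatively, classify each $T$ by $b=|T\cap X|$, which ranges over $0\le b\le t$. Such a $T$ is determined by the pair $(T\cap X,\,T\cap Y)$, where $T\cap X\in\binom Xb$ and $T\cap Y\in\binom Y{t-b}$. Conversely, every such pair gives a distinct $t$-set.
\end{itemize}
Summing over $b$ yields the identity. No range issues arise: $\binom xb=0$ when $b>x$, and likewise for $y$, so these terms contribute nothing on either side.

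For the ``in particular'' statement, apply the identity with $t=m$ and $x+y=N$. This gives $\sum_{b=0}^{m}\binom xb\binom y{m-b}=\binom Nm$. The $b=m$ term is $\binom xm\binom y0=\binom xm$. Moving it to the right-hand side gives $\sum_{b=0}^{m-1}\binom xb\binom y{m-b}=\binom Nm-\binom xm$, as claimed.

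There is no genuine obstacle here. The only point needing care is the convention $\binom ab=0$ for $b>a$, which is what lets the sum run over all $0\le b\le t$ for any nonnegative $x$ and $y$.
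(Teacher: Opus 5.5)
Your double-counting argument is correct and complete, including the handling of the convention $\binom ab=0$ for $b>a$ and the extraction of the $b=m$ term $\binom xm\binom y0=\binom xm$ for the ``in particular'' part. The paper does not prove this lemma; it cites it as a standard identity from Stanley's \emph{Enumerative Combinatorics}, and your argument is the standard proof.
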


We next use a minimum-degree condition that guarantees a perfect matching in a uniform family.

\begin{lemma}[Daykin and H\"aggkvist \cite{DaykinHaggkvist}]
        \label{lem:daykin-haggkvist-input}
        For every integer $m\ge2$, there exists an integer
        $C_{\ref{lem:daykin-haggkvist-input}}
                =C_{\ref{lem:daykin-haggkvist-input}}(m)$ such that the
        following holds.  Let $V$ be a finite set with
        $N=|V|\ge C_{\ref{lem:daykin-haggkvist-input}}$ and $m\mid N$.
        If $G\subseteq\binom Vm$ satisfies $\delta(G)>\left(1-\frac1m\right)\binom{N-1}{m-1}$, then $G$ contains a perfect matching.
\end{lemma}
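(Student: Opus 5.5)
The lemma is quoted from Daykin and H\"aggkvist, and in the paper it will only be invoked. If I had to prove it, I would use the classical augmenting argument on a maximum matching.

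\textbf{Setup.} Let $M$ be a maximum matching in $G$ and suppose it is not perfect. Since $m\mid N$, the uncovered set $U=V\setminus V(M)$ has $|U|\ge m$, so I can fix $m$ distinct uncovered vertices $u_1,\dots,u_m$. By maximality of $M$, every edge of $G$ containing some $u_i$ meets $V(M)$. The aim is to find an edge $e\in M$ and $m$ pairwise disjoint edges inside $e\cup\{u_1,\dots,u_m\}\cup W$, with $W\subseteq U$, that together cover more vertices than $e$. In the simplest form this means two disjoint edges $f,g\subseteq e\cup U$. Replacing $e$ by $f,g$ enlarges $M$, a contradiction.

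\textbf{Counting.} Sum the degrees:
\[
\sum_i d_G(u_i)>(m-1)\binom{N-1}{m-1}.
\]
The edges through the $u_i$ are partitioned according to their trace on the matching edges. Edges through $u_i$ that meet at least two edges of $M$, or that meet $U$ outside $\{u_i\}$ in a complicated pattern, are few compared with $\binom{N-1}{m-1}$. More precisely, $\binom{N-1}{m-1}$ counts all $(m-1)$-sets, and a $(1-1/m)$-fraction bound on "bad" configurations should follow by comparing each edge through $u_i$ with the $m$ edges of $M$ it could be rotated into. The natural auxiliary structure is, for each $e\in M$, the bipartite incidence between the vertices $x\in e$ and the $u_i$, recording whether $(e\setminus\{x\})\cup\{u_i\}$, or a similar set, lies in $G$.

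\textbf{Averaging.} Averaging over $e\in M$ gives an edge $e$ for which more than $(m-1)$ of the $m^2$ possible swaps $(x,u_i)$ lie in $G$. By a pigeonhole (Hall-type) argument, two such swaps $(x,u_i)$ and $(y,u_j)$ with $x\neq y$, $i\neq j$ can be combined with the free vertices of $U$ to produce two disjoint edges, which is the required augmentation.

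\textbf{Main obstacle.} The hard step is controlling the edges through the $u_i$ that spread over several matching edges. These edges are not captured by single-edge swaps, so they must be shown to contribute at most the lower-order slack. This is where the requirement $N\ge C_{\ref{lem:daykin-haggkvist-input}}(m)$ enters: when $N$ is large, the number of $(m-1)$-sets meeting $V(M)$ in a "spread" pattern relative to a fixed $e$ is $o\bigl(\binom{N-1}{m-1}\bigr)$ on average. The strict inequality in the degree hypothesis then leaves room for the swap count to exceed the pigeonhole threshold.

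If this direct approach runs into technical trouble, I would simply cite \cite{DaykinHaggkvist}, as the paper does.
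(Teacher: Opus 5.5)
\textbf{Verdict: genuine gap.} The paper gives no proof of this lemma; it is imported from Daykin and H\"aggkvist. So your fallback of citing it is exactly what the paper does. The direct argument you sketch, however, is essentially the standard proof for $m=2$, and for $m\ge3$ it fails at the level of the augmenting structure itself, not in a technical step.

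Every configuration you aim for uses a single edge $e\in M$: the new edges lie in $e\cup U$, and your swaps are the sets $(e\setminus\{x\})\cup\{u_i\}$. Write $L_i$ for the link of $u_i$. The only members of $L_i$ that can take part are $(m-1)$-sets contained in $(e\cup U)\setminus\{u_i\}$ for one $e\in M$. When $|U|=m$ there are at most $|M|\binom{2m-1}{m-1}=O(N)=o\bigl(\binom{N-1}{m-1}\bigr)$ such sets (as $m\ge3$), so the degree hypothesis says nothing about them. In that case the $(m-1)$-sets meeting $m-1$ distinct edges of $M$ already form a $1-O(1/N)$ fraction of $\binom{V\setminus\{u_i\}}{m-1}$, and $L_i$ may consist of these alone.

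Thus your ``main obstacle'' paragraph has it backwards. The spread edges are essentially all edges through $u_i$, not lower-order slack, and no averaging over $e\in M$ can force a one-edge augmentation. The combination step also fails on its own terms: for $m\ge3$ the sets $(e\setminus\{x\})\cup\{u_i\}$ and $(e\setminus\{y\})\cup\{u_j\}$ share the $m-2$ vertices of $e\setminus\{x,y\}$. Adding free vertices of $U$ cannot make overlapping sets disjoint.

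The configuration a typical member of $L_i$ can take part in exchanges $m-1$ edges $e_1,\dots,e_{m-1}\in M$ for $m$ edges $\{u_i\}\cup S_i$, where $S_1,\dots,S_m$ partition $e_1\cup\dots\cup e_{m-1}$ into transversals. For a random such configuration, each $S_i$ is uniform over the family $\mathcal T$ of $(m-1)$-sets meeting $m-1$ distinct edges of $M$ in one vertex each. If every $L_i$ contained more than a $(1-\frac1m)$-fraction of $\mathcal T$, a union bound would make all $m$ sets $\{u_i\}\cup S_i$ edges simultaneously, giving a matching of size $|M|+1$. This is where the constant $1-\frac1m$ comes from.

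Even this does not close by itself, because the hypothesis has no $\varepsilon$-slack. Since $|\mathcal T|<\binom{N-1}{m-1}$, the degree bound only yields $|L_i\cap\mathcal T|>|\mathcal T|-\frac1m\binom{N-1}{m-1}$, which is below the required $(1-\frac1m)|\mathcal T|$. So members of $L_i$ that meet $U$ or meet an edge of $M$ twice must also be exploited. That additional work is missing from your sketch, and citing Daykin and H\"aggkvist, as the paper does, is the appropriate course.
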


Using \cref{lem:daykin-haggkvist-input}, we prove the following lemma.

\begin{lemma}\label{lem:dense-defect}
        For every integer $m\ge2$ and every $\theta>0$, there exist
        constants $\zeta=\zeta(m,\theta)>0$ and
        $v_{\ref{lem:dense-defect}}=v_{\ref{lem:dense-defect}}(m,\theta)$
        such that the following holds.  Let $V$ be a finite set with
        $N=|V|\ge v_{\ref{lem:dense-defect}}$ and $m\mid N$.
        If $G\subseteq\binom Vm$ satisfies $\delta(G)\ge \theta N^{m-1}$, then either $G$ has a perfect matching or $\left|\binom Vm\setminus G\right|\ge \zeta N^m$.
\end{lemma}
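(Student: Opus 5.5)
The plan is to reduce to Lemma~\ref{lem:daykin-haggkvist-input} by deleting a small set of low-degree vertices, which are few because the complement of $G$ is small. Suppose $\left|\binom Vm\setminus G\right|<\zeta N^m$ with $\zeta$ to be chosen; we will show $G$ has a perfect matching. Write $\bar G=\binom Vm\setminus G$. Call a vertex $v$ \emph{bad} if $d_{\bar G}(v)\ge \eta N^{m-1}$, where $\eta>0$ is small (say $\eta\le 1/(4m\cdot m!)$). Since $\sum_v d_{\bar G}(v)=m|\bar G|<m\zeta N^m$, the set $X$ of bad vertices has $|X|<m\zeta N/\eta$. Choose $\zeta$ so that $|X|\le \gamma N$ with $\gamma$ much smaller than $\theta$.

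First cover the bad vertices greedily by disjoint edges of $G$. Process $x\in X$ one at a time. Each $x$ has $d_G(x)\ge\theta N^{m-1}$. At each step, the set $U$ of vertices already used has size at most $m|X|\le m\gamma N$. The number of $(m-1)$-sets containing some vertex of $U$ is at most $m\gamma N\binom{N}{m-2}\le m\gamma N^{m-1}$. Taking $\gamma<\theta/(2m)$ therefore leaves an edge of $G$ through $x$ that avoids $U$. This gives a matching $M_1$ covering $X$, with $|V(M_1)|\le m|X|\le m\gamma N$. The set $V'=V\setminus V(M_1)$ has size divisible by $m$, and $N'=|V'|\ge N/2$.

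Next check the Daykin--H\"aggkvist condition on $V'$. Every $v\in V'$ is good, so it misses at most $\eta N^{m-1}$ edges of $\binom{V'}{m}$ that contain it. Hence
\[
\delta(G[V'])\ge \binom{N'-1}{m-1}-\eta N^{m-1}\ge \binom{N'-1}{m-1}-\eta 2^{m-1}N'^{m-1}.
\]
For large $N$ we have $\binom{N'-1}{m-1}\ge N'^{m-1}/(2(m-1)!)$. So the right-hand side exceeds $(1-\tfrac1m)\binom{N'-1}{m-1}$ once $\eta 2^{m-1}<1/(2m\,(m-1)!)$, which we ensure by the choice of $\eta$. Take $v_{\ref{lem:dense-defect}}$ large enough that $N'\ge N/2\ge C_{\ref{lem:daykin-haggkvist-input}}$ and that the binomial estimate holds. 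Lemma~\ref{lem:daykin-haggkvist-input} then gives a perfect matching $M_2$ of $G[V']$, and $M_1\cup M_2$ is a perfect matching of $G$.

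The only delicate point is the order in which constants are chosen: first $\eta=\eta(m)$, then $\gamma=\min\{\theta/(4m),1/4m\}$, then $\zeta=\eta\gamma/m$, and finally $v_{\ref{lem:dense-defect}}$. This is where the hypothesis $\delta(G)\ge\theta N^{m-1}$ is used, namely to absorb the bad vertices, since without it an isolated vertex would block any perfect matching.
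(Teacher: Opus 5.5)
Your proof is correct and is essentially the paper's argument: you cover the few vertices of large missing degree by a greedy matching using $\delta(G)\ge\theta N^{m-1}$, then apply Daykin--H\"aggkvist to the rest, with the paper's $B$, $\varepsilon$, $\beta_0$ playing the roles of your $X$, $\eta$, $\gamma$. One small slip: your parenthetical choice $\eta\le 1/(4m\cdot m!)$ does not give your own requirement $\eta 2^{m-1}<1/(2m(m-1)!)$ once $m\ge4$, since that requirement needs $\eta<1/(2^m m!)$; because you state the requirement explicitly, choosing $\eta$ to satisfy it fixes this.
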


\begin{proof}
        Let $\beta_1=1/8$.
        Choose
        \[
                0<\beta_0<\min\left\{\frac{\beta_1}{2m},
                \frac{\theta}{10m(2m)^{m-2}}\right\},
        \]
        and then choose
        \[
                0<\varepsilon<\frac{1}{2m(m-1)!}(1-\beta_1)^{m-1}.
        \]
        Choose $\zeta>0$ so that $m\zeta/\varepsilon\le\beta_0$, and choose $v_{\ref{lem:dense-defect}}$ sufficiently large so that \cref{lem:daykin-haggkvist-input} applies whenever $N'\ge (1-\beta_1)N$, $N\ge v_{\ref{lem:dense-defect}}$, and $m\mid N'$, and so that
        \begin{equation}\label{eq:DH-delta-explicit}
                \varepsilon N^{m-1}<\frac1m\binom{N'-1}{m-1}
        \end{equation}
        whenever $N\ge v_{\ref{lem:dense-defect}}$ and $N'\ge(1-\beta_1)N$.

        Now let $V$ and $G\subseteq\binom Vm$ satisfy the assumptions of the lemma.
        If $\left|\binom Vm\setminus G\right|\ge\zeta |V|^m$, then the proof is complete.
        Thus, suppose that $\left|\binom Vm\setminus G\right|<\zeta |V|^m$.
        We shall show that $G$ has a perfect matching.

        Set $Z=\binom Vm\setminus G$, and let $B=\{x\in V:d_Z(x)>\varepsilon |V|^{m-1}\}$.
        Since $\sum_x d_Z(x)=m|Z|$, we have $|B|<m\zeta |V|/\varepsilon\le\beta_0|V|$.

        We construct $\mathcal C$ inductively, starting with $\mathcal C=\emptyset$, and maintaining that $\mathcal C$ is a matching in $G$.
        Suppose that $\mathcal C$ has been constructed and does not yet cover $B$.
        Choose $x\in B\setminus\bigcup_{E\in\mathcal C}E$, and let $W=\bigcup_{E\in\mathcal C}E$.
        We shall find a set $E\in G$ with $x\in E$ and $E\cap W=\emptyset$.
        Adding such a set preserves the matching property.

        Since each previously chosen set covered at least one new vertex of $B$, before this step fewer than $|B|$ sets have been chosen.
        Hence, $|W|=m|\mathcal C|<m|B|\le m\beta_0|V|$.
        A member of $G$ containing $x$ cannot be added to the current matching only if it meets $W$.
        For each $y\in W$, the number of $m$-sets containing both $x$ and $y$ is at most $\binom{|V|}{m-2}$.
        Therefore, the number of members of $G$ containing $x$ that cannot be added is at most
        \[
                |W|\binom{|V|}{m-2}
                \le m\beta_0|V|\binom{|V|}{m-2}
                <\frac{\theta}{2}|V|^{m-1},
        \]
        where the last inequality follows from the choices of $\beta_0$ and $v_{\ref{lem:dense-defect}}$.
        Since $d_G(x)\ge\theta |V|^{m-1}$, there exists a member of $G$ containing $x$ and disjoint from $W$.
        We add this set to $\mathcal C$.
        Continuing in this way, we obtain a matching $\mathcal C\subseteq G$ whose union contains $B$.

        Let $V'=V\setminus\bigcup_{E\in\mathcal C}E$.
        Since the sets in $\mathcal C$ are pairwise disjoint $m$-sets, we have $|V'|=|V|-m|\mathcal C|$.
        In particular, as $m\mid |V|$, the integer $|V'|$ is still divisible by $m$.
        Also, since $|\mathcal C|\le |B|$,
        \[
                |V'|\ge(1-m\beta_0)|V|\ge(1-\beta_1)|V|.
        \]

        Let $Z'=\binom{V'}m\setminus G[V']$.
        Since $B\subseteq\bigcup_{E\in\mathcal C}E$, every $x\in V'$ lies outside $B$.
        Hence, $d_Z(x)\le \varepsilon |V|^{m-1}$.
        Moreover, every missing $m$-set of $G[V']$ containing $x$ is also a missing $m$-set of $G$ containing $x$.
        Thus, $d_{Z'}(x)\le d_Z(x)\le \varepsilon |V|^{m-1}$.
        Consequently, for every $x\in V'$, $d_{G[V']}(x) \ge \binom{|V'|-1}{m-1}-\varepsilon |V|^{m-1}$.
        By \eqref{eq:DH-delta-explicit}, we obtain
        \[
                \delta(G[V'])
                \ge \binom{|V'|-1}{m-1}-\varepsilon |V|^{m-1}
                > \left(1-\frac1m\right)\binom{|V'|-1}{m-1}.
        \]
        Therefore, \cref{lem:daykin-haggkvist-input} gives a perfect matching in $G[V']$.
        Together with the sets in $\mathcal C$, this gives a perfect matching of $G$.
        Thus, whenever $\left|\binom Vm\setminus G\right|<\zeta |V|^m$, $G$ has a perfect matching.
        This proves the lemma.
\end{proof}

Given a set $A$ and an integer $q$, write $A=A_1\sqcup\cdots\sqcup A_q$ if $A$ is the disjoint union of $A_1,\ldots,A_q$.
Given integers $p,b,u,h$ with $p,b,u\ge1$ and $0\le h\le u$, let
$P$ and $X$ be disjoint sets with $|P|=pu$ and $|X|=bu+h$.
For each $e\in\binom Pp$, let $\G_e^{(b)}\subseteq\binom Xb$ and $\G_e^{(b+1)}\subseteq\binom X{b+1}$.
A \emph{mixed ordered partition} is a pair of ordered partitions $P=e_1\sqcup\cdots\sqcup e_u$ and $X=T_1\sqcup\cdots\sqcup T_u$ such that $|e_r|=p$ for every $r$, exactly $h$ of the $T_r$'s have size $b+1$, the remaining $u-h$ have size $b$, and $T_r\in\G_{e_r}^{(|T_r|)}$ for every $r\in[u]$.

The following lemma on mixed ordered partitions will be used in the proof of \cref{thm:reduced-all}.

\begin{lemma}[Chi and Wang \cite{ChiWangK2}]\label{lem:p-ordered-local-blocker}
        Let $m,p$ be integers with $m\ge2$ and $1\le p\le m-1$, and set $b=m+1-p$.
        There exist constants $\gamma_{\ref{lem:p-ordered-local-blocker}}>0$, $\rho_{\ref{lem:p-ordered-local-blocker}}>0$, and $C_{\ref{lem:p-ordered-local-blocker}}$ depending only on $m,p$ such that the following holds.
        Let $u,h$ be integers with $u\ge C_{\ref{lem:p-ordered-local-blocker}}$ and $0\le h\le\rho_{\ref{lem:p-ordered-local-blocker}}u$.
        Let $P,X$ and the families $\G_e^{(b)},\G_e^{(b+1)}$ be as in the definition above.
        If no mixed ordered partition exists, then
        \[
                \sum_{e\in\binom Pp}
                \left(
                \left|\binom Xb\setminus\G_e^{(b)}\right|
                +
                \left|\binom X{b+1}\setminus\G_e^{(b+1)}\right|
                \right)
                \ge \gamma_{\ref{lem:p-ordered-local-blocker}}(h+1)u^m.
        \]
\end{lemma}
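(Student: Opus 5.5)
We would treat a mixed ordered partition as a perfect matching in an auxiliary hypergraph. Its vertex set is $P\cup X$, and its edges are the sets $e\cup T$ with $e\in\binom Pp$ and $T\in\G_e^{(b)}\cup\G_e^{(b+1)}$. The matching must also use exactly $h$ edges of the larger type. Write $Z$ for the family of missing pairs $(e,T)$, so that the sum in the statement is $|Z|$.

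The main count is the following. A vertex $v\in P\cup X$ lies in $\Theta(u^{p+b-1})=\Theta(u^m)$ potential pairs of the smaller type. So $|Z|<\gamma(h+1)u^m$ forces the set $B$ of \emph{bad} vertices, those lying in more than $\varepsilon u^m$ missing pairs, to be small. In fact, summing degrees over $Z$ gives $|B|\le (m+1)\gamma(h+1)/\varepsilon$. We then argue by contraposition: assuming $|Z|<\gamma(h+1)u^m$ with $\gamma\ll\varepsilon\ll\rho^{-1}$, we construct a mixed ordered partition.

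\textbf{Step 1 (covering bad vertices).} We cover $B$ greedily by disjoint allowed pairs, mirroring the proof of \cref{lem:dense-defect}. We use $(b+1)$-type pairs for the first $\min(h,|B\cap X|)$ bad vertices of $X$, and $b$-type pairs otherwise. A bad vertex $v$ must still have many allowed pairs avoiding the $O(|B|)$ used vertices. If $v$ had fewer than half of its potential pairs allowed, then $v$ alone would contribute $\Omega(u^m)$ to $|Z|$. We can afford this as long as the number of such ``hopeless'' vertices is at most about $\gamma'(h+1)$. This is the source of the factor $h+1$: a single hopeless vertex costs only $\Theta(u^m)$, and a configuration with no partition must contain about $h+1$ of them.

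More precisely, we would show the following. If at most $h$ vertices of $X$ are hopeless, they can all be absorbed into $(b+1)$-type pairs, since a hopeless vertex may still have many $(b+1)$-type pairs. Hence no partition exists only when $\Omega(h+1)$ vertices are hopeless in both layers, which yields $|Z|\ge\gamma(h+1)u^m$. Likewise, one hopeless vertex in $P$ or a hopeless configuration in the $b$-layer already yields the $h=0$ bound $\gamma u^m$.

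\textbf{Step 2 (remaining $(b+1)$-type pairs).} After Step 1, we still need $h'\le h\le\rho u$ further $(b+1)$-type pairs. Every remaining vertex is good, so these can be placed greedily on remaining vertices. Because $\rho$ is small, the leftover sets $P'$ and $X'$ still satisfy $|P'|=pu'$, $|X'|=bu'$ and $u'\ge(1-O(\rho))u$.

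\textbf{Step 3 (completing the partition).} It remains to find an ordered partition of the leftover vertices into $b$-type pairs, i.e.\ a perfect matching in a $(p+b)$-partite-type hypergraph in which every vertex misses at most $\varepsilon u^m$ pairs. We would prove a bipartite analogue of \cref{lem:daykin-haggkvist-input}. Take a uniformly random partition of $P'$ into $p$-sets and of $X'$ into $b$-sets, and consider the bipartite graph between the two sets of blocks whose edges are the allowed pairs. By concentration, every block is adjacent to all but an $O(\varepsilon^{1/2})$-fraction of the other side. Hall's condition then gives a perfect matching, possibly after a few switchings for atypical blocks.

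The \textbf{main obstacle} is Step 1: extracting the linear dependence on $h$. A naive covering argument only yields $|Z|\ge\gamma u^m$. To get the $(h+1)$ factor, we would first try to classify blocking configurations as space barriers. These are a set $S\subseteq X$ of roughly $h+1$ vertices, each of which lies only in pairs with few allowed $b$-type members. Then, by double counting over $S$, we would show that each such vertex contributes $\Omega(u^m)$ missing pairs independently.
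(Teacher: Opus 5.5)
The paper gives no proof of this lemma; it is quoted from \cite{ChiWangK2}, so I am judging your plan on its own. The architecture is viable: count bad vertices, put them into $(b+1)$-blocks, place the remaining $(b+1)$-blocks greedily, and finish with a near-complete minimum-degree perfect matching. However, Step~1 as written fails, and the obstacle you identify is misdiagnosed.

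\textbf{Where Step~1 fails.} You cover bad vertices of $P$ by $b$-type pairs and, when that is impossible, settle for $|Z|\ge\gamma u^m$. For $h\ge1$ that bound is not enough. For example, let one vertex $w\in P$ lie in no allowed $b$-type pair and let every other pair be allowed. Then $|Z|=\Theta(u^m)$, which is far below $\gamma(h+1)u^m$ once $h$ is large. A mixed ordered partition exists (put $w$ into a $(b+1)$-block), yet your Step~1 stalls. Likewise, ``hopeless in both layers'' runs the logic backwards. A vertex missing a constant fraction of its $\Theta(u^{m+1})$ potential $(b+1)$-type pairs forces $|Z|=\Omega(u^{m+1})$, which exceeds $\gamma(h+1)u^m\le 2\gamma u^{m+1}$ for small $\gamma$. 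So under your contrapositive hypothesis such vertices do not exist at all.

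\textbf{The repair.} No space-barrier classification is needed; the factor $h+1$ falls out of your own degree count. Missing pairs have at most $m+2$ elements, so $|B|<(m+2)\gamma(h+1)/\varepsilon$. With $\gamma\le\varepsilon/(2(m+2))$, this gives $B=\emptyset$ when $h=0$, so Step~1 is vacuous there, and $|B|\le h$ when $h\ge1$. For $h\ge1$ you also have $|Z|<2\gamma\rho u^{m+1}$. Hence every vertex has all but an $O(\gamma\rho)$-fraction of its potential $(b+1)$-type pairs allowed. So each bad vertex, whether in $P$ or in $X$, can be given its own $(b+1)$-block greedily: the $O(\rho u)$ used vertices kill only $O(\rho u^{m+1})$ of these pairs. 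Steps~2--3 then run on $|P'|=p(u-h)$ and $|X'|=b(u-h)$, where every vertex misses only an $O(\varepsilon)$-fraction of its $b$-type pairs.

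\textbf{Step~3 still needs a real argument.} A random block partition may contain about $\varepsilon u$ blocks (for instance dead $b$-sets) with no allowed partner at all. So ``every block is adjacent to almost all of the other side'' is false, and the switching must be made precise. It is cleaner to split $P'$ and $X'$ at random into $m+1$ parts of size $u-h$. Then apply a partite analogue of \cref{lem:daykin-haggkvist-input} to the resulting $(m+1)$-partite $(m+1)$-graph, whose minimum degree is near-complete by concentration.
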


\medskip
\noindent {\bf Remark.} Later, in the proof of \cref{thm:reduced-all}, we may and shall assume that
$0<\rho_{\ref{lem:p-ordered-local-blocker}}\le1$ and
$C_{\ref{lem:p-ordered-local-blocker}}\ge1$.
Indeed, replacing $\rho_{\ref{lem:p-ordered-local-blocker}}$ by
$\min\{\rho_{\ref{lem:p-ordered-local-blocker}},1\}$ and replacing
$C_{\ref{lem:p-ordered-local-blocker}}$ by
$\max\{C_{\ref{lem:p-ordered-local-blocker}},1\}$ only strengthens
the hypotheses and leaves the conclusion unchanged.

\section{Reduction to upward closed families}\label{sec:proof-main}

We first state the following extremal result for upward closed families with no members of size less than $m$.
We will prove this result in \cref{sec:proof-reduced}.
\begin{theorem}\label{thm:reduced-all}
        Fix $m\ge3$ and $1\le k\le m-1$.
        There exist constants $\alpha'=\alpha'(m,k)\in(0,1)$ and $\beta'=\beta'(m,k)>1$, and an integer $s_0'=s_0'(m,k)$, such that the following holds for all integers $s\ge s_0'$ and all integers $c$ with $0\le c<s$.
        Let $n=ms+c$ and set $\ell=s-c$.
        Suppose that $\beta' s^{\frac{k-1}{k}}\le c\le\alpha' s^{\frac k{k+1}}$.
        Let $\F\subseteq2^{[n]}$ be an upward closed family with $\nu(\F)<s$ and $\F\cap\binom{[n]}{<m}=\emptyset$.
        We have $|\F|\le |\mathcal H^k(m,s,\ell)|$, with equality if and only if $\F=\mathcal H^k(m,s,\ell;A)$ for some $A\in\binom{[n]}{a_k}$, where $a_k=ms-kc-1$.
\end{theorem}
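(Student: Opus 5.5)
We follow the strategy outlined in the introduction. Let $\F$ be as in the statement and write $\mathcal H=\mathcal H^k(m,s,\ell;A)$. First choose $A\in\binom{[n]}{a_k}$ maximizing $g(A)=|\F_m\cap\binom Am|$, and set $R=[n]\setminus A$, so $|R|=(k+1)c+1$. Every layer of $\mathcal H$ of size at least $m+L+1$ is full. Hence we compare $\F$ and $\mathcal H$ layer by layer on the layers $m+j$, $0\le j\le L$. There the surplus $\mathcal S_j=\F_{m+j}\setminus\mathcal H_{m+j}$ consists of sets $F$ with $|F\cap A|<\tau_j$. The deficit $\mathcal D_j=\mathcal H_{m+j}\setminus\F_{m+j}$ consists of sets with $|F\cap A|\ge\tau_j$ that are missing from $\F$. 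The aim is $\sum_j|\mathcal D_j|\ge\sum_j|\mathcal S_j|$, with equality only if both sides vanish; then $\F=\mathcal H$. The trivial bound on the surplus is $\sum_j\sum_{x<\tau_j}\binom{a_k}{x}\binom{|R|}{m+j-x}$, which is of order $E_k(s,c)$. The window $\beta's^{(k-1)/k}\le c\le\alpha's^{k/(k+1)}$ makes exactly the terms of $E_k$ with $(j,x)=(1,\tau_1-1)$ dominate or be balanced against $s^{m-1}$-scale quantities. Thus the real content is for surplus sets using close to $\tau_j-1$ vertices of $A$.

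The key step is a matching argument. Fix a surplus set $S\in\mathcal S_j$ with $|S\cap A|=x<\tau_j$. Upward closure gives all supersets of $S$ in $\F$. We then try to extend $S$ to an $s$-matching in $\F$ by partitioning the remaining vertices. Most of $A$ is split into $m$-sets of $\F_m\cap\binom Am$. The vertices of $R\setminus S$ are combined with pieces of $A$ into sets that $\mathcal H$ contains, namely $(m+j')$-sets meeting $A$ in at least $\tau_{j'}$ points. The count works because $|A|+|R|=ms+c$ and $a_k=ms-kc-1$. The sets meeting $R$ use up $c$ extra vertices. Each $R$-vertex placed in a set of size $m+j$ saves $k$ from the $A$-requirement; this bookkeeping is where $\tau_j=m-kj$ is used. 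Since $\nu(\F)<s$, every such prescribed partition must be blocked by a missing set of $\F$. We apply \cref{lem:dense-defect} to the part inside $A$ and \cref{lem:p-ordered-local-blocker} to the mixed part pairing $p$-subsets of $S$-complements with $b$- or $(b+1)$-subsets of $R$, with $h$ counting the $(b+1)$-blocks. Averaging over the random partitions, each surplus set forces $\Omega((h+1)u^m)$ deficit sets, weighted appropriately. The density hypothesis needed in \cref{lem:dense-defect} holds, since otherwise the deficit in the $m$-layer is already of order $s^m$.

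Summing, the deficit exceeds the surplus unless the surplus concentrates on a bounded set $Y\subseteq A$ of vertices. In that case almost all surplus sets meet $Y$, and the vertices of $Y$ have low degree in $\F_m[A]$. In this exceptional configuration, we swap $Y$ for an equal number of suitable vertices of $R$ of high $\F_m$-degree. Such vertices exist because surplus sets containing $R$-vertices plus upward closure give many $m$-sets in $\F$. The swap yields $A'$ with $g(A')>g(A)$, contradicting maximality. For the equality case, if $\mathcal S=\emptyset$ then $\F\subseteq\mathcal H$, so $|\F|=|\mathcal H|$ forces $\F=\mathcal H$.

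The main obstacle is the quantitative matching step: showing that one surplus set forces more than one deficit set on average, uniformly across all $(j,x)$. This requires the lower bound $c\ge\beta's^{(k-1)/k}$, which makes $\mathcal H^k$ beat $\mathcal H^{k-1}$, and the upper bound $c\le\alpha's^{k/(k+1)}$, which keeps $h\le\rho u$ in \cref{lem:p-ordered-local-blocker}. We would first handle $j=0$, $x=m-1$, where surplus $m$-sets have one $R$-vertex, and check that the constants $\gamma$ and $\zeta$ beat the multiplicities. Only then would we treat the general $(j,x)$ by the same template.
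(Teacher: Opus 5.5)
Your outline has the right ingredients: a $g$-maximizing center, \cref{lem:dense-defect} on the $A$-part, \cref{lem:p-ordered-local-blocker} on a mixed part, and a center swap. However, the central accounting step fails.

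You extend a \emph{single} surplus set $S$, obtain forced deficit sets for it, and then sum over $S$. The missing sets produced by the two lemmas lie in the complement of $S$ and are spread over the whole ground set. Different surplus sets therefore certify largely the same deficit sets, and summing overcounts. A single extension is also too weak on its own. For $\sigma(S)=1$ (e.g.\ an $m$-set with one vertex in $R$) it has $h=0$ and certifies only about $s^{m-k}c^k$ missing sets. A priori the surplus may have matching number of order $c$ and size of order $cs^{m-1}$. Your claim that the trivial surplus bound is $O(E_k(s,c))$ is false for the same reason: the $(j,x)=(0,m-1)$ term $\binom{a_k}{m-1}|R|$ is of order $cs^{m-1}$.

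The missing idea is to run the comparison through the matching number $\nu$ of the surplus.
\begin{enumerate}
\item Every surplus set meets the vertex set of a maximum matching $\mathcal J$ of surplus sets. Apart from the star sets $P\cup\{z\}$ with $P\in\binom{A}{m-1}$ and $z\in R$, every vertex lies in only $O(E_k(s,c))=O(\eta s^{m-1})$ surplus sets; this is where the upper bound on $c$ is used. Hence $|\X(A)|\le M_1\nu B_k(s,c)$ once low-degree center vertices are excluded.
\item You must extend the \emph{whole} matching $\mathcal J$ at once. Its total defect $D(\mathcal J)\ge|\mathcal J|$ is what permits $h\approx\min\{\rho c,(D-d-1)/k\}$ blocks of size $b+1$, where $d$ is the number of low-degree center vertices (see below). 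The factor $h+1$ in \cref{lem:p-ordered-local-blocker}, after double counting over the $\lambda$-sets $T$ (which gains a factor $(s/c)^{p}$), gives a deficit of order at least $(\nu-d)s^{m-k}c^k$. The lower bound on $c$ turns this into more than $M_3(\nu-d)B_k(s,c)$.
\end{enumerate}
Note also that $h\le\rho u$ comes from capping $h$ at $\rho c$, not from the upper bound on $c$.

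Your treatment of the exceptional configuration also has errors. A vertex of $A$ of degree below $\theta s^{m-1}$ in $\F_m\cap\binom Am$ accounts for only about $s^{m-1}$ missing $m$-sets, not $s^m$. So failure of the density hypothesis cannot be dismissed. The set $U$ of such vertices has size up to $O(c)$, not bounded. It must be kept out of the $A$-part and absorbed into the mixed blocks, which costs $d=|U|$ units of defect; this is why the completion only works when $\nu>d$.

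When $U\neq\emptyset$, it is the deficit (missing $m$-subsets of $A$ meeting $U$), not the surplus, that concentrates on $U$. Each vertex of $U$ lies in only $O(E_k(s,c))$ surplus sets.

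Finally, upward closure produces supersets, never new $m$-sets, so it cannot supply the $R$-vertices for the swap. What makes $A^*=(A\setminus U')\cup S$ better is a combination of three facts: the standing assumption $|\X(A)|\ge|\M(A)|$, the bound $|Z_U|\gtrsim|U'|\binom{a-d}{m-1}$, and the capture and completion bounds above. Together these give a set $S\subseteq R$ with $|S|=|U'|$ such that the surplus outside the star family on $S$ is at most $|\M(A)\setminus Z_U|+O(dE_k(s,c))$. Hence at least $|Z_U|-O(dE_k(s,c))$ surplus $m$-sets lie in $\binom{A^*}m$. This outweighs the at most $|U'|\theta s^{m-1}$ members of $\F_m\cap\binom Am$ that are lost.
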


Assuming \cref{thm:reduced-all}, we now prove \cref{thm:main-result-2} by reducing an arbitrary family to this setting.
First we remove a possible empty set from $\F$ without changing its size, and then take the upward closure of $\F$.
If the resulting upward closed family contains no nonempty set of size less than $m$, then \cref{thm:reduced-all} applies directly.
Otherwise it contains a $q$-set $B$, where $1\le q\le m-1$.
We reserve $B$ as one member of a potential matching, pass to the family induced by $[n]\setminus B$, and apply \cref{thm:reduced-all} with $s'=s-1$ and $c'=c+m-q$.

\subsection{The low-layer gap}

Let $\alpha',\beta',s_0'$ be the constants from \cref{thm:reduced-all}.
For fixed $k,s,c$, write $n(s,c)=ms+c$, $a(s,c)=ms-kc-1$ and $r(s,c)=n(s,c)-a(s,c)=(k+1)c+1$.
Thus, $n(s,c)=ms+c$ is the size of the ground set of $\mathcal H^k(m,s,\ell)$.
Moreover, $a(s,c)$ is the size of the center of $\mathcal H^k(m,s,\ell)$ and $r(s,c)$ is the size of its complement.
For comparison later, let $D_k(s,c)$ be the number of sets missing from $\mathcal H^k(m,s,\ell)$, namely,
\[
        D_k(s,c)\coloneqq 2^{n(s,c)}-|\mathcal H^k(m,s,\ell)|.
\]
Since $\mathcal H^k(m,s,\ell;A)$ contains no set below size $m$,
contains every layer of size at least $m+L+1$, and in layer $m+j$
for $0\le j\le L$ misses exactly the sets with fewer than $\tau_j$
elements in the center $A$, we have
\[
        D_k(s,c)=
        \sum_{i=0}^{m-1}\binom{n(s,c)}{i}
        +
        \sum_{j=0}^L\sum_{b=0}^{\tau_j-1}
        \binom{a(s,c)}{b}\binom{r(s,c)}{m+j-b}.
\]

For the shifted comparison, it is convenient to remove the low layers from $D_k(s,c)$.
Thus, define
\[
        \overline{D}_k(s,c)=
        D_k(s,c)-\sum_{i=0}^{m-1}\binom{n(s,c)}{i}.
\]
The next lemma gives a uniform lower bound on
$\overline{D}_k(s-1,c+m-q)-D_k(s,c)$ for $1\le q\le m-1$.
This estimate is the quantitative ingredient needed to complete the reduction in the next subsection.

\begin{lemma}\label{lem:main-low-layer-gap}
        There exist constants $C_k=C_k(m,k)>0$, $P_k=P_k(m,k)>0$ and an integer $s_1=s_1(m,k)$ such that $C_k\ge \beta'$, $P_k\le \alpha'$, $s_1\ge s_0'$, and the following holds.
        Suppose $s\ge s_1$ and $C_k s^{(k-1)/k}\le c\le P_k s^{k/(k+1)}$.
        Let $n=ms+c$.
        For every $1\le q\le m-1$, let $s'=s-1$, $c'=c+m-q$, and $n'=n-q=ms'+c'$.
        We have $s'\ge s_0'$, $\beta'(s')^{(k-1)/k}\le c'\le \alpha'(s')^{k/(k+1)}$ and $c'<s'$.
        Moreover,
        \[
                \overline{D}_k(s',c')
                \ge
                D_k(s,c)+\frac{k}{2}\frac{m^{m-1}}{(m-1)!}s^{m-1}.
        \]
\end{lemma}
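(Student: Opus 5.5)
The plan is to compare $\overline D_k(s',c')$ and $D_k(s,c)$ term by term. We isolate the one term of order $c\,s^{m-1}$ whose change produces the gain, and bound every other change by $o(s^{m-1})$ using $E_k(s,c)$-type quantities.

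First, the parameter conditions. Choose $C_k\ge 2\beta'$ and $P_k\le \alpha'/2$, and take $s_1$ large. Since $c'\ge c$ and $s'<s$,
\[
c'\ge C_k s^{(k-1)/k}\ge \beta'(s')^{(k-1)/k}.
\]
Also
\[
c'\le c+m\le P_k s^{k/(k+1)}+m\le \alpha'(s-1)^{k/(k+1)}
\]
once $s$ is large. The bound $c'<s'$ and $s'\ge s_0'$ are immediate for large $s$.

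Next, the parameter shifts. Write $a=a(s,c)$, $r=r(s,c)$, $a'=a(s',c')$, $r'=r(s',c')$. Then
\[
a'=a-m-k(m-q),\qquad r'=r+(k+1)(m-q).
\]
So the center shrinks by $O(m^2)$ and the complement grows by $(k+1)(m-q)\ge k+1$.

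The key term is $j=0$, $b=m-1$, namely $\binom{a}{m-1}\binom r1$. Its change is
\[
\binom{a'}{m-1}r'-\binom{a}{m-1}r
=(r'-r)\binom{a'}{m-1}-r\Bigl(\binom a{m-1}-\binom{a'}{m-1}\Bigr).
\]
The first part is at least $(k+1)\frac{(ms)^{m-1}}{(m-1)!}(1-o(1))$, because $a=ms-O(c)$ and $c=o(s)$. The second part is $O(c\,s^{m-2})=o(s^{m-1})$. Meanwhile $D_k(s,c)$ contains the low layers $\sum_{i<m}\binom{n}{i}$, which $\overline D_k$ does not. This sum is $\frac{(ms)^{m-1}}{(m-1)!}(1+o(1))$, since $n=ms+c$. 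Subtracting it leaves a net gain of $k\frac{m^{m-1}}{(m-1)!}s^{m-1}(1-o(1))$. This gives the claimed $\frac k2\frac{m^{m-1}}{(m-1)!}s^{m-1}$, provided all remaining differences total at most a small constant times $s^{m-1}$.

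The remaining terms are $\binom{a}{b}\binom{r}{m+j-b}$ for ($j=0$, $b\le m-2$) or ($1\le j\le L$, $b\le\tau_j-1$). Each has order $s^b c^{m+j-b}$. Shifting $a$ and $r$ by $O(1)$ changes such a term by
\[
O\bigl(s^{b-1}c^{m+j-b}+s^{b}c^{m+j-b-1}\bigr).
\]
Summed over all terms, this is $O(E_k(s,c))$ up to constants depending on $m,k$; for $j=0$ the worst contributions are $O(cs^{m-2})$ and $O(s^{m-2})$. I only need the lower bound, so I can simply drop any terms that increase and bound the decreases by this quantity.

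The main obstacle is showing $E_k(s,c)\le \eta s^{m-1}$ with $\eta$ small. The dangerous summands are those with $x=\tau_j-1$, namely $s^{m-kj-1}c^{(k+1)j}$. Relative to $s^{m-1}$ this is $(c^{k+1}/s^k)^j\le P_k^{(k+1)j}$. This is exactly where the upper window $c\le P_k s^{k/(k+1)}$ is used. Choosing $P_k$ small enough in terms of $m,k$ (and of the constants in the $O(\cdot)$ bounds above) makes the total error below $\frac k4\frac{m^{m-1}}{(m-1)!}s^{m-1}$. All other summands are $o(s^{m-1})$ because $c=o(s)$. With $s_1$ large to absorb the $(1\pm o(1))$ factors, the inequality follows.
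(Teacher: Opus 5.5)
Your proposal is correct and follows essentially the paper's argument. The principal gain of order $s^{m-1}$ comes from the growth $r'-r=(k+1)(m-q)$ in the $m$-layer, offset by the low layer $\binom{n}{m-1}$. All remaining differences are $O(E_k(s,c))$, and the double sum in $E_k$ is controlled by $P_k^{(k+1)j}s^{m-1}$ through the upper window. The only difference is that the paper computes the whole $m$-layer change in closed form via Vandermonde, as $\binom{n'}m-\binom nm+\binom am-\binom{a'}m$, whereas you isolate the $b=m-1$ term and absorb the $b\le m-2$ terms of that layer into the $O(cs^{m-2})$ error.
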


In the next subsection, we prove \cref{thm:main-result-2} assuming \cref{lem:main-low-layer-gap}.
The proof of \cref{lem:main-low-layer-gap} is given in \cref{subsec:proofofclaim}.

\subsection{Closing the reduction}\label{subsec:close}

\begin{proof}[Proof of \cref{thm:main-result-2} from \cref{lem:main-low-layer-gap}]
        Fix $m\ge3$ and $1\le k\le m-1$, and take $C_k,P_k,s_1$ from \cref{lem:main-low-layer-gap}.
        Set $\beta=C_k$, $\alpha=P_k$, and $s_0=s_1$.
        Let $s\ge s_0$ and let $c$ satisfy
        \[
                \beta s^{(k-1)/k}\le c\le \alpha s^{k/(k+1)}.
        \]
        Set $n=ms+c$ and $\ell=s-c$.

        For any $A\in\binom{[n]}{a_k}$, as observed in \cref{sec:intro},
        the family $\mathcal H^k(m,s,\ell;A)$ has matching number less than $s$.
        This gives the lower bound for $e(n,s)$.

        Now we prove the upper bound.
        Let $\F\subseteq2^{[n]}$ satisfy $\nu(\F)<s$.
        We first remove the empty set if it exists.
        If $\emptyset\in\F$, then $\F$ cannot contain every singleton, because any $s$ distinct singletons would form an $s$-matching.
        Choose $x\in[n]$ with $\{x\}\notin\F$, and let
        \[
                \F'=(\F\setminus\{\emptyset\})\cup\{\{x\}\}.
        \]
        The family $\F'$ satisfies $|\F'|=|\F|$, $\emptyset\notin\F'$, and $\nu(\F')<s$.
        Indeed, an $s$-matching in $\F'$ that avoids $\{x\}$ is already an $s$-matching in $\F$, while an $s$-matching in $\F'$ that uses $\{x\}$ leaves an $(s-1)$-matching in $\F\setminus\{\emptyset\}$; adjoining $\emptyset$ would then give an $s$-matching in $\F$.
        If $\emptyset\notin\F$, set $\F'=\F$.

        Since $\emptyset\notin\F'$, taking upward closure preserves the matching-number bound.
        Indeed, if $G_1,\ldots,G_t\in\cl(\F')$ are pairwise disjoint, then for each $i$ there is a nonempty $G_i'\in\F'$ with $G_i'\subseteq G_i$; the sets $G_1',\ldots,G_t'$ are pairwise disjoint.
        Hence, $\nu(\cl(\F'))<s$, and $|\F|=|\F'|\le |\cl(\F')|$.

        Suppose first that $\cl(\F')\cap\binom{[n]}{<m}=\emptyset$.
        Since $C_k\ge \beta'$, $P_k\le \alpha'$, and $s\ge s_0=s_1\ge s_0'$, the pair $(s,c)$ lies in the admissible range of \cref{thm:reduced-all}.
        Therefore, $\cl(\F')$ satisfies the hypotheses of \cref{thm:reduced-all}, and $|\F|\le |\cl(\F')|\le |\mathcal H^k(m,s,\ell)|$.
        This proves the upper bound in this case.

        We may therefore assume that $\cl(\F')$ contains a $q$-set $B$ with $1\le q\le m-1$.
        Set $V=[n]\setminus B$ and $\F''=\{F\in\cl(\F'):F\subseteq V\}$.
        Also set $s'=s-1$, $c'=c+m-q$, and $n'=n-q=ms'+c'$.
        By \cref{lem:main-low-layer-gap}, the pair $(s',c')$ satisfies the hypotheses of \cref{thm:reduced-all}.

        The family $\F''$ has no $(s-1)$-matching.
        Otherwise, such a matching together with $B\in\cl(\F')$ would give an $s$-matching in $\cl(\F')$.
        Let
        \[
                \F''_{\ge m}\coloneqq
                \F''\cap\binom V{\ge m}.
        \]
        Then $\F''_{\ge m}$ is upward closed in $V$, contains no set below size $m$, and has matching number less than $s'$.
        Applying \cref{thm:reduced-all} on $V$, after relabelling $V$ as $[n']$, gives
        \[
                |\F''_{\ge m}|\le 2^{n'}-D_k(s',c').
        \]
        Consequently,
        \[
                |\F''|=
                \left|\F''\cap\binom V{<m}\right|
                +|\F''_{\ge m}| \le
                \sum_{i=0}^{m-1}\binom{n'}i
                +2^{n'}-D_k(s',c')  =2^{n'}-\overline D_k(s',c').
        \]
        Every member of $\cl(\F')$ either intersects $B$ or belongs to $\F''$.
        Thus, using the bound on $\F''$ and \cref{lem:main-low-layer-gap}, we obtain
        \[
                \begin{aligned}
                        |\F|
                         & \le |\cl(\F')|
                        \le (2^n-2^{n'})+|\F''|
                        \le 2^n-\overline{D}_k(s',c')  \le 2^n-D_k(s,c)
                        -\frac{k}{2}\frac{m^{m-1}}{(m-1)!}s^{m-1} \\
                         & =|\mathcal H^k(m,s,\ell)|
                        -\frac{k}{2}\frac{m^{m-1}}{(m-1)!}s^{m-1}  <|\mathcal H^k(m,s,\ell)|.
                \end{aligned}
        \]
        This proves the upper bound in all cases.

        Finally we consider equality.
        Suppose $|\F|=|\mathcal H^k(m,s,\ell)|$.
        The strict low-layer case above is impossible.
        In particular, $\emptyset\notin\F$: if $\emptyset\in\F$, then the replacement step places a singleton into $\F'$, hence into $\cl(\F')$, and the strict low-layer case applies.
        Thus, $\F'=\F$.

        The upward closure $\cl(\F)$ contains no nonempty set below size $m$.
        If $\cl(\F)\ne\F$, then
        \[
                |\cl(\F)|>|\F|=|\mathcal H^k(m,s,\ell)|,
        \]
        contradicting \cref{thm:reduced-all}, since $\cl(\F)$ is upward closed, has matching number less than $s$, and contains no set below size $m$.
        Hence, $\F$ is upward closed and contains no set below size $m$.
        The equality statement in \cref{thm:reduced-all} gives $\F=\mathcal H^k(m,s,\ell;A)$ for some $A\in\binom{[n]}{a_k}$, where $a_k=ms-kc-1$.
\end{proof}

\subsection{Proof of Lemma~\ref{lem:main-low-layer-gap}}\label{subsec:proofofclaim}

Fix $q\in[m-1]$, and use $s'$, $c'$, and $n'$ as in the statement.
Let $a=ms-kc-1$, $r=(k+1)c+1$, $a'=ms'-kc'-1$ and $r'=(k+1)c'+1$.
Thus, $a+r=n$ and $a'+r'=n'$.
Moreover, $a'=a-((k+1)m-kq)$ and $r'=r+(k+1)(m-q)$.

Here $a$ is the size of the center and $r$ is the complement size for the original parameters $(s,c)$.
The quantities $a'$ and $r'$ are the sizes of the corresponding center and complement for the new parameters $(s',c')$, respectively.

The goal is to prove that $\overline D_k(s',c')-D_k(s,c)$ is bounded below by a positive constant times $s^{m-1}$.
To this end, we decompose the difference $\overline D_k(s',c')-D_k(s,c)$ into three terms:
\[
        \overline D_k(s',c')-D_k(s,c)=X+Y+Z,
\]
where
\begin{align*}
        X
         & =
        \left(
        \sum_{b=0}^{m-1}
        \binom{a'}b\binom{r'}{m-b}
        -
        \sum_{b=0}^{m-1}
        \binom ab\binom r{m-b}
        \right)
        -
        \binom n{m-1}, \\
        Y
         & =
        \sum_{j=1}^{L}\sum_{b=0}^{\tau_j-1}
        \left(
        \binom{a'}b\binom{r'}{m+j-b}
        -
        \binom ab\binom r{m+j-b}
        \right) \text{ and }Z=
        -
        \sum_{i=0}^{m-2}\binom ni.
\end{align*}
We now show that $X$ is the principal term, namely the term of order $s^{m-1}$, while both $Y$ and $Z$ are of order $O(E_k(s,c))$.

We first estimate $Z$. Since $Z=-\sum_{i=0}^{m-2}\binom ni$ and $n=O_m(s)$, we have $Z=O_m(s^{m-2})$, and hence $Z=O(E_k(s,c))$.

Now we estimate $X$.
By \cref{lem:vandermonde}, applied with $x=a$, $y=r$, and $a+r=n$, $\sum_{b=0}^{m-1} \binom ab\binom r{m-b} = \binom nm-\binom am$.
Similarly, we obtain $\sum_{b=0}^{m-1} \binom{a'}b\binom{r'}{m-b} = \binom{n'}m-\binom{a'}m$, and therefore
\[
        X
        =
        \binom{n'}m-\binom nm
        +
        \binom am-\binom{a'}m
        -
        \binom n{m-1}.
\]

Since $n=ms+c$ and $a=ms-kc-1$, we have $n=ms+O(c)$ and $a=ms+O(c)$.
Also $n'-n=-q$ and $a-a'=((k+1)m-kq)$, both bounded in terms of $m,k$.
Since $n'=n-q$ and $a=a'+((k+1)m-kq)$, we obtain
\begin{align*}
        \binom{n'}m-\binom nm
         & =
        -q\frac{(ms)^{m-1}}{(m-1)!}
        +O_{m,k}(cs^{m-2}), \\
        \binom am-\binom{a'}m
         & =
        ((k+1)m-kq)\frac{(ms)^{m-1}}{(m-1)!}
        +O_{m,k}(cs^{m-2}).
\end{align*}
Moreover, $\binom n{m-1} = \frac{(ms)^{m-1}}{(m-1)!}+O_{m,k}(cs^{m-2})$.
Therefore,
\[
        X
        =
        \left((k+1)(m-q)-1\right)
        \frac{(ms)^{m-1}}{(m-1)!}
        +O_{m,k}(cs^{m-2}).
\]

It remains to bound $Y$, the contribution from layers above $m$.
Fix $1\le j\le L$ and $0\le b\le\tau_j-1$.
We estimate the corresponding summand directly.
We have
\begin{align*}
         & \left|
        \binom{a'}b\binom{r'}{m+j-b}
        -
        \binom ab\binom r{m+j-b}
        \right|      \\
         & \qquad\le
        \left(\binom ab-\binom{a'}b\right)
        \binom{r'}{m+j-b}
        +
        \binom ab
        \left(
        \binom{r'}{m+j-b}-\binom r{m+j-b}
        \right).
\end{align*}
Since $a-a'$ is bounded in terms of $m,k$, for $b\ge1$ we have $\binom ab-\binom{a'}b = \sum_{t=0}^{a-a'-1}\binom{a-t-1}{b-1} = O_{m,k}(s^{b-1})$, while this difference is zero when $b=0$.
Also $r'=O_{m,k}(c)$, and hence $\left(\binom ab-\binom{a'}b\right) \binom{r'}{m+j-b} = O_{m,k}\left(s^{b-1}c^{m+j-b}\right)$, with the term omitted when $b=0$.

Similarly, since $r'-r$ is bounded in terms of $m,k$,
\[
        \binom{r'}{m+j-b}-\binom r{m+j-b}
        =
        \sum_{t=0}^{r'-r-1}\binom{r+t}{m+j-b-1}
        =
        O_{m,k}\left(c^{m+j-b-1}\right).
\]
Since $\binom ab=O_{m,k}(s^b)$, this gives $\binom ab \left( \binom{r'}{m+j-b}-\binom r{m+j-b} \right) = O_{m,k}\left(s^bc^{m+j-b-1}\right)$.
Therefore,
\[
        \binom{a'}b\binom{r'}{m+j-b}
        -
        \binom ab\binom r{m+j-b}
        =
        O_{m,k}
        \left(
        s^{b-1}c^{m+j-b}
        +
        s^bc^{m+j-b-1}
        \right),
\]
with the first term omitted when $b=0$.
Summing over all $1\le j\le L$ and $0\le b\le\tau_j-1$, we obtain
\[
        Y
        =O_{m,k}\left(\sum_{j=1}^{L}\sum_{b=0}^{\tau_j-1}
        s^{b-1}c^{m+j-b}
        +
        s^bc^{m+j-b-1}
        \right)=
        O_{m,k}\left(
        \sum_{j=1}^{L}\sum_{b=0}^{\tau_j-1}
        s^bc^{m+j-b-1}
        \right),
\]
where the last equality follows from $c\le s$.

Combining the estimates for $X$, $Y$, and $Z$, there exists a constant $K=K(m,k)$ such that, whenever $1\le c\le s/2$,
\begin{equation}\label{eq:low-layer-clean-estimate}
        \left|
        \overline D_k(s',c')-D_k(s,c)
        -
        \left((k+1)(m-q)-1\right)
        \frac{(ms)^{m-1}}{(m-1)!}
        \right|
        \le K E_k(s,c).
\end{equation}

We now choose the constants in the lemma.
Let $C_k=2\beta'$.
Choose $0<P_k\le \min\{\alpha'/4,1\}$ sufficiently small so that
\[
        K
        \sum_{j=1}^{L}\sum_{b=0}^{\tau_j-1}
        P_k^{(k+1)j}
        \le
        \frac14 k\frac{m^{m-1}}{(m-1)!}.
\]
This is possible because $L$ and the $\tau_j$'s are fixed in terms of $m,k$, and the finite sum on the left tends to $0$ as $P_k$ tends to $0^+$.

Recall that $E_k(s,c)=s^{m-2}+cs^{m-2}+ \sum_{j=1}^{L}\sum_{x=0}^{\tau_j-1}s^x c^{m+j-x-1}$.
Suppose $c\le P_k s^{k/(k+1)}$.
For every summand $s^bc^{m+j-b-1}$ in the double sum defining
$E_k(s,c)$, we have $b\le\tau_j-1=m-kj-1$.
Hence, $m+j-b-1\ge (k+1)j$ and $b+\frac{k}{k+1}(m+j-b-1)\le m-1$.
Therefore,
\[
        s^bc^{m+j-b-1}
        \le
        P_k^{m+j-b-1}
        s^{b+\frac{k}{k+1}(m+j-b-1)}
        \le
        P_k^{(k+1)j}s^{m-1}.
\]
Summing over all admissible pairs and using the choice of $P_k$, the
contribution of the double sum to $K E_k(s,c)$ is at most
$\frac14 k\frac{m^{m-1}}{(m-1)!}s^{m-1}$.
After $P_k$ is fixed, choose an integer $s_1\ge s_0'$ such that,
for all $s\ge s_1$, all $1\le q\le m-1$, and all
$c\le P_k s^{k/(k+1)}$, the following hold:
\[
        c\le s/2, \text{ }
        K(s^{m-2}+cs^{m-2})
        \le
        \frac14 k\frac{(ms)^{m-1}}{(m-1)!}, \text{ }
        P_k s^{k/(k+1)}+m
        \le \alpha'(s-1)^{k/(k+1)}, \text{ }
        s-1\ge s_0'
\]
and $P_k s^{k/(k+1)}+m<s-1$.
Consequently, $K E_k(s,c) \le \frac12 k\frac{(ms)^{m-1}}{(m-1)!}$.

By the choice of $s_1$, the new parameters $(s',c')$ are admissible.
Indeed, if $C_k s^{(k-1)/k}\le c\le P_k s^{k/(k+1)}$, then
\[
        c'=c+m-q\ge c\ge 2\beta' s^{(k-1)/k}
        \ge \beta'(s')^{(k-1)/k},
\]
while $P_k\le\alpha'/4$ gives, by the choice of $s_1$,
\[
        c'=c+m-q
        \le P_k s^{k/(k+1)}+m
        \le \alpha'(s')^{k/(k+1)}.
\]
Also $s'\ge s_0'$ and $c'<s'$ by the choice of $s_1$.
Finally, for $1\le q\le m-1$, $(k+1)(m-q)-1\ge k$.
Thus, \eqref{eq:low-layer-clean-estimate} gives
\[
        \overline D_k(s',c')-D_k(s,c)
        \ge
        k\frac{(ms)^{m-1}}{(m-1)!}
        -
        \frac12 k\frac{(ms)^{m-1}}{(m-1)!}=\frac12 k\frac{m^{m-1}}{(m-1)!}s^{m-1}.
\]
Hence, the lemma follows.

\section{Extremality of upward closed families}\label{sec:proof-reduced}

In this section, we prove \cref{thm:reduced-all}.
Let $\F_m$ be the $m$-th layer of $\F$.
We choose an $(ms-kc-1)$-set $A$ that maximizes $g(A)=|\F_m\cap\binom{A}{m}|$.
For this choice of $A$, we compare the surplus $\F\setminus\mathcal H^k(m,s,\ell;A)$ with the deficit $\mathcal H^k(m,s,\ell;A)\setminus\F$.
Since
\[
        |\F|-|\mathcal H^k(m,s,\ell;A)|
        =
        |\F\setminus\mathcal H^k(m,s,\ell;A)|
        -
        |\mathcal H^k(m,s,\ell;A)\setminus\F|,
\]
it suffices to show that the surplus is smaller than the deficit unless
both set differences are empty. We establish this comparison using
matching estimates together with the maximality of $g(A)$. This also
shows that equality holds precisely when
$\F=\mathcal H^k(m,s,\ell;A)$.

\subsection{Comparison with a \texorpdfstring{$g$}{g}-maximizing center}

Throughout this section we fix integers $m\ge3$ and $1\le k\le m-1$.
Let $b=k+1$, $p=m-k$, $L=\left\lfloor\frac{m-1}{k}\right\rfloor$, and $\tau_j=m-kj$ for $0\le j\le L$.
Thus, $p+b=m+1$ and $\tau_1=p$.
All implicit constants below depend only on $m$ and $k$.

The constants $s_0'$, $\alpha'$, and $\beta'$ in \cref{thm:reduced-all} will be chosen below.
Throughout the proof, let $s\ge s_0'$ and let $c$ satisfy
\[
        \beta' s^{\frac{k-1}{k}}\le c\le \alpha' s^{\frac k{k+1}}.
\]
Set $n=ms+c$ and $\ell=s-c$.
Let $\F\subseteq2^{[n]}$ be upward closed, satisfy $\nu(\F)<s$, and contain no set of size below $m$.

Let $a=ms-kc-1$, and for $A\in\binom{[n]}a$ write $R=[n]\setminus A$.
Thus, $|R|=n-a=(k+1)c+1=bc+1$.
Define $\X(A)=\F\setminus\mathcal H^k(m,s,\ell;A)$ and $\M(A)=\mathcal H^k(m,s,\ell;A)\setminus\F$.
For a set $E\subseteq[n]$, call $E$ \emph{$A$-extra} if $E\in\X(A)$, and call $E$ \emph{$A$-missing} if $E\in\M(A)$.
Moreover, call $E$ \emph{$A$-canonical} if $E\in\mathcal H^k(m,s,\ell;A)$.
Clearly, every $A$-missing set is $A$-canonical.

For $A\in\binom{[n]}a$, define $g(A)\coloneqq |\F_m\cap\binom Am|$.
Choose $A\in\binom{[n]}a$ maximizing $g(A)$.
We fix this center set $A$ for the rest of the proof.
Since $\F$ contains no set of size below $m$ and $\mathcal H^k(m,s,\ell;A)$ contains every set of size at least $m+L+1$, the family $\X(A)$ is supported on the layers $m,m+1,\ldots,m+L$.
For this fixed set $A$, we have $|\F|-|\mathcal H^k(m,s,\ell;A)|=|\X(A)|-|\M(A)|$.
If $|\X(A)|<|\M(A)|$, then the desired inequality is strict.
Therefore, we may assume
\begin{equation}\label{eq:defect-nonstrict}
        |\X(A)|\ge |\M(A)|
\end{equation}
from now on.

\subsection{Defect of a set and matching parameters}

Given a set $S\subseteq[n]$, define its \emph{$A$-defect} $\sigma(S)$ by
\[
        \sigma(S)\coloneqq m(k+1)-k|S|-|S\cap A|.
\]
By \eqref{eq:Hfamily-def}, $S\in\mathcal H^k(m,s,\ell;A)$ if and only if $\sigma(S)\le0$.
In particular, $S\in\X(A)$ if and only if $S\in\F$ and $\sigma(S)>0$.
Now we prove some basic properties for this definition.

\begin{proposition}
        If $P_1,\ldots,P_s$ form a partition of $[n]$, then $\sum_{r=1}^{s}\sigma(P_r)=1$.
        Moreover, if $S\in\X(A)\cap\binom{[n]}{m+j}$, where $0\le j\le L$, then $\sigma(S)>0$,
        \begin{equation}\label{eq:defect-type-formula}
                |S\cap A|=m-kj-\sigma(S) \text{ and }
                |S\cap R|=(k+1)j+\sigma(S).
        \end{equation}
\end{proposition}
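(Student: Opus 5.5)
The argument is a direct computation from the definition $\sigma(S)=m(k+1)-k|S|-|S\cap A|$, using $|A|=a=ms-kc-1$ and $n=ms+c$.

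For the partition identity, let $P_1,\ldots,P_s$ partition $[n]$ and sum the defects term by term. This gives
\[
\sum_{r=1}^s\sigma(P_r)=sm(k+1)-k\sum_r|P_r|-\sum_r|P_r\cap A| = sm(k+1)-kn-|A|,
\]
because the $P_r$ partition $[n]$, so $\sum_r|P_r|=n$ and $\sum_r|P_r\cap A|=|A|$. Substituting the values of $n$ and $|A|$ gives
\[
smk+sm-k(ms+c)-(ms-kc-1)=1.
\]
This is consistent with the weight computation in the introduction: the total weight is $s-1/(m(k+1))$, and scaling by $m(k+1)$ yields exactly the deficit $1$.

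For the second part, let $S\in\X(A)\cap\binom{[n]}{m+j}$. By the definition of $\X(A)$, $S\notin\mathcal H^k(m,s,\ell;A)$. By \eqref{eq:Hfamily-def}, this means $k|S|+|S\cap A|<m(k+1)$, which is exactly $\sigma(S)>0$. Now substitute $|S|=m+j$ into the definition of $\sigma$:
\[
\sigma(S)=m(k+1)-k(m+j)-|S\cap A| = m-kj-|S\cap A|,
\]
so $|S\cap A|=m-kj-\sigma(S)$. Since $R=[n]\setminus A$, we have $|S\cap R|=|S|-|S\cap A|=(m+j)-(m-kj-\sigma(S))=(k+1)j+\sigma(S)$. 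This proves \eqref{eq:defect-type-formula}.

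The proof is routine, and the only point needing care is the arithmetic in the partition identity. We expect no genuine obstacle.
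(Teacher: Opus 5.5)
Your proposal is correct and follows essentially the same route as the paper. Like the paper, you sum the defects over the partition to get $sm(k+1)-kn-|A|=1$ and substitute $|S|=m+j$ into the definition of $\sigma$. You additionally spell out why $\sigma(S)>0$ follows from $S\notin\mathcal H^k(m,s,\ell;A)$, which the paper notes just before the proposition.
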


\begin{proof}
        Since $P_1,\ldots,P_s$ partition $[n]$ and $|A|=ms-kc-1$, we have
        \[
                \sum_{r=1}^s\sigma(P_r)
                =sm(k+1)-kn-|A|
                =sm(k+1)-k(ms+c)-(ms-kc-1)=1.
        \]
        If $|S|=m+j$, then $\sigma(S)=m(k+1)-k(m+j)-|S\cap A|=m-kj-|S\cap A|$, and the desired result follows.
\end{proof}

For any matching $\mathcal J\subseteq\X(A)$, define the following parameters.
For $E\in\mathcal J$, write $|E|=m+j(E)$ for $0\le j(E)\le L$.
Since $E\in\X(A)$, $\sigma(E)>0$.
Define the \emph{total layer excess} and the \emph{total $A$-defect} of $\mathcal J$ by
$\omega(\mathcal J)=\sum_{E\in\mathcal J}j(E)$ and
$D(\mathcal J)=\sum_{E\in\mathcal J}\sigma(E)$, respectively.
Thus, $D(\mathcal J)\ge|\mathcal J|$.
Write $V(\mathcal J)=\bigcup_{E\in\mathcal J}E$.
Write $V_R(\mathcal J)=R\cap V(\mathcal J)$ and $V_A(\mathcal J)=A\cap V(\mathcal J)$.
Since the sets in $\mathcal J$ are pairwise disjoint, summing \eqref{eq:defect-type-formula} over $E\in\mathcal J$ gives
\begin{equation}\label{eq:defect-J-support}
        |V_R(\mathcal J)|=b\,\omega(\mathcal J)+D(\mathcal J) \text{ and }
        |V_A(\mathcal J)|=m|\mathcal J|-k\omega(\mathcal J)-D(\mathcal J).
\end{equation}
Since $|R|=bc+1$, every matching $\mathcal J\subseteq\X(A)$ satisfies
\begin{equation}\label{eq:defect-R-capacity}
        D(\mathcal J)+b\omega(\mathcal J)\le bc+1.
\end{equation}
Finally, every member of $\X(A)$ meets $R$: if $S\subseteq A$ and $|S|\ge m$, then $\sigma(S)=m(k+1)-(k+1)|S|\le0$, a contradiction.
Thus, every matching in $\X(A)$ has size at most $|R|=bc+1$.

\subsection{Constants and estimates}

We use $E_k(s,c)$ and $B_k(s,c)$ from
\eqref{eq:defect-proof-E-B} to bound the error terms.  Throughout this
section, all constants implicit in $O_{m,k}(\cdot)$, as well as all
unnamed constants, depend only on $m$ and $k$.

We use the following standard hierarchy convention.  A hierarchy
\[
        x_1\ll x_2\ll\cdots\ll x_r
\]
is chosen from right to left: Each parameter is chosen sufficiently
small as a function of all quantities to its right and of all constants
fixed previously.  Equivalently, when reciprocals occur, the
corresponding constants are chosen sufficiently large.  The hierarchy
is also understood to satisfy any later displayed inequality whose
other quantities depend only on parameters to its right.  Since only
finitely many such inequalities occur, all requirements may be imposed
simultaneously.

Let
$\gamma_{\ref{lem:p-ordered-local-blocker}}$,
$\rho_{\ref{lem:p-ordered-local-blocker}}$, and
$C_{\ref{lem:p-ordered-local-blocker}}$
be the constants from \cref{lem:p-ordered-local-blocker} with $p=m-k$.
As in the remark following that lemma, assume
$0<\rho_{\ref{lem:p-ordered-local-blocker}}\le1$ and
$C_{\ref{lem:p-ordered-local-blocker}}\ge1$.
Set $\xi=1/(2(bL+1))$, choose
$0<\rho\ll \xi,\rho_{\ref{lem:p-ordered-local-blocker}}$, and set
\[
        \alpha_1=
        \min\left\{\frac1{k+1},\frac{\rho}{4b}\right\}.
\]
Fix $\kappa=\frac1{(m-1)!}\left(\frac m4\right)^{m-1}$.
If $s\ge4m$ and $x$ is an integer with $x\ge ms/2$, then
$x-i\ge ms/4$ for every $0\le i\le m-2$, and hence
\begin{equation}\label{eq:defect-kappa-binomial}
        \binom{x}{m-1}\ge\kappa s^{m-1}.
\end{equation}
Set $\chi_m=1/(2^{m-1}(m-1)!)$, choose
$0<\theta\ll\kappa,\chi_m$, and let
$\theta'=\theta/(2(2m)^{m-1})$.
Applying \cref{lem:dense-defect} with $\theta'$ gives constants $\zeta=\zeta(m,\theta')>0$ and $v_{\ref{lem:dense-defect}}
        =v_{\ref{lem:dense-defect}}(m,\theta')$.
Set $\zeta'=\frac14(m/2)^m\zeta$, and, for
$1\le j\le L$ and $0\le x\le\tau_j-1$, write
$q_{j,x}=m+j-x$.

After these quantities have been fixed, choose the remaining constants,
from right to left, according to
\[
        0<
        \frac1{s_0'}
        \ll
        \frac1{\beta'}
        \ll
        \alpha'
        \ll
        \eta
        \ll
        \frac1{M_3}
        \ll
        \frac1{M_2}
        \ll
        \frac1{M_1}
        \ll1.
\]
Here each choice may depend on $m,k$ and on all constants fixed above.
These choices define the constants $\alpha'$, $\beta'$, and $s_0'$
appearing in \cref{thm:reduced-all}.

We first collect the estimates supplied by this hierarchy.

\begin{claim}\label{cl:window-estimates}
        Whenever $s\ge s_0'$ and
        $\beta's^{(k-1)/k}\le c\le \alpha's^{k/(k+1)}$, we have $E_k(s,c)\le\eta s^{m-1}$, $B_k(s,c)\le2s^{m-1}$, $s^{m-k}c^k\ge M_3\eta^{-1}B_k(s,c)$, $cB_k(s,c)\le\eta s^m$ and $c/s\le \eta$.
\end{claim}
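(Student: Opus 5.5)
The claim is a bookkeeping statement about the window $\beta's^{(k-1)/k}\le c\le\alpha's^{k/(k+1)}$. I will prove each inequality by bounding every monomial in $E_k(s,c)$ and $B_k(s,c)$ separately, using only the hierarchy $1/s_0'\ll1/\beta'\ll\alpha'\ll\eta\ll1/M_3$. First I use $c\le\alpha's^{k/(k+1)}$ to control the upper-range terms, as in the proof of \cref{lem:main-low-layer-gap}.

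\textbf{The bounds $E_k\le\eta s^{m-1}$ and $B_k\le2s^{m-1}$.} The terms $s^{m-2}$ and $cs^{m-2}$ are handled as follows. We have $cs^{m-2}\le\alpha's^{m-2+k/(k+1)}$, and this is at most $\frac{\eta}{3}s^{m-1}$ once $s\ge s_0'$. For a double-sum term $s^xc^{m+j-x-1}$ with $x\le\tau_j-1$, we have $m+j-x-1\ge(k+1)j$ and $x+\frac{k}{k+1}(m+j-x-1)\le m-1$. This gives
\[
s^xc^{m+j-x-1}\le(\alpha')^{(k+1)j}s^{m-1}\le\alpha' s^{m-1},
\]
provided $\alpha'\le1$. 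The number of such terms is bounded in terms of $m,k$, so choosing $\alpha'\ll\eta$ gives $E_k(s,c)\le\eta s^{m-1}$. Then $B_k(s,c)=s^{m-1}+E_k(s,c)\le(1+\eta)s^{m-1}\le2s^{m-1}$. The same reasoning gives $c/s\le\alpha's^{-1/(k+1)}\le\eta$. Combining the bound on $B_k$ with $c\le\alpha's^{k/(k+1)}\le\frac{\eta}{2}s$ gives $cB_k(s,c)\le2cs^{m-1}\le\eta s^m$.

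\textbf{The lower bound $s^{m-k}c^k\ge M_3\eta^{-1}B_k(s,c)$.} This is the only place the lower end $c\ge\beta's^{(k-1)/k}$ is used. Since $B_k\le2s^{m-1}$, it suffices to show $s^{m-k}c^k\ge2M_3\eta^{-1}s^{m-1}$, that is, $c^k\ge2M_3\eta^{-1}s^{k-1}$. This holds because $c^k\ge(\beta')^ks^{k-1}$ and $\beta'$ is chosen after $\eta$ and $M_3$, with $1/\beta'\ll\eta,1/M_3$, so that $(\beta')^k\ge2M_3/\eta$.

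\textbf{Order of choices.} The step needing care is keeping the constant choices consistent. The quantity $\alpha'$ must be small relative to $\eta$, and it must be fixed before $\beta'$ and $s_0'$. The quantity $\beta'$ must be large relative to $M_3/\eta$. Finally, $s_0'$ is chosen last so that three things hold: $\alpha's^{-1/(k+1)}$ is small, the interval is nonempty, and lower-order terms such as $s^{m-2}\le\frac{\eta}{3}s^{m-1}$ are absorbed. This ordering matches the hierarchy as displayed, so no genuine obstacle arises beyond checking the exponent inequality $x+\frac{k}{k+1}(m+j-x-1)\le m-1$. That inequality follows from $x\le m-kj-1$.
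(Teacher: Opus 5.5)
Your proof is correct and follows the paper's argument essentially step for step. You use the same monomial-by-monomial bound $s^xc^{q}\le(\alpha')^{q}s^{m-1}$, obtained from the exponent inequality $x+\frac{k}{k+1}q\le m-1$, and the same reduction of the lower bound to $(\beta')^k\ge 2M_3/\eta$ via $B_k(s,c)\le 2s^{m-1}$, all within the same right-to-left hierarchy. Your handling of the $s^{m-2}$ and $cs^{m-2}$ terms, which splits $\eta$ into thirds, is slightly more careful than the paper's, which only records $s^{-1}\le\eta$ and $c/s\le\alpha'$.
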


\begin{proof}
        Fix $1\le j\le L$ and $0\le x\le\tau_j-1$, and set
        $q=m+j-x-1$.
        Since $x\le m-kj-1$, we have $q\ge(k+1)j$ and
        $x+\frac{k}{k+1}q\le m-1$.  Consequently, the upper bound on $c$
        gives
        \[
                s^xc^q
                \le
                (\alpha')^q s^{x+\frac{k}{k+1}q}
                \le
                (\alpha')^q s^{m-1}.
        \]
        There are only finitely many admissible pairs $(j,x)$.  By the
        hierarchy and the choice of $s_0'$, we also have $s^{-1}\le\eta$
        and $c/s\le\alpha'$.  Thus, the preceding estimate and
        $\alpha'\ll\eta$ give $E_k(s,c)\le\eta s^{m-1}$; since
        $\eta\ll1$, also $B_k(s,c)\le2s^{m-1}$.

        On the other hand, the lower bound on $c$ and the hierarchy give
        \[
                s^{m-k}c^k
                \ge (\beta')^k s^{m-1}
                \ge 2M_3\eta^{-1}s^{m-1}
                \ge M_3\eta^{-1}B_k(s,c).
        \]
        Finally, the hierarchy ensures that $2\alpha'\le\eta$, $c/s\le\alpha's^{-1/(k+1)}\le\alpha'$ and $cB_k(s,c)\le2cs^{m-1}\le\eta s^m$.
        In particular, $c/s\le\eta$.
\end{proof}

\subsection{Closing the comparison}

We first isolate the low-degree part of the center.
Let $H=\F_m\cap\binom Am$.
Define $U=\left\{x\in A:d_H(x)\le \theta s^{m-1}\right\}$ and $A_0=A\setminus U$.
Let $d=|U|$.
Define
\begin{equation}\label{eq:defect-ZU-definition}
        Z_U=\{E\in\binom Am\setminus\F_m:E\cap U\ne\emptyset\}.
\end{equation}
We claim that $Z_U$ is a collection of $A$-missing $m$-sets, and hence $Z_U\subseteq\M(A)$.
Indeed, if $E\in Z_U$, then $E\in\binom Am$ and $E\notin\F_m$.
Since $E\subseteq A$ and $|E|=m$, we have $k|E|+|E\cap A|=km+m=m(k+1)$.
Thus, $E\in\mathcal H^k_m(m,s,\ell;A)$ by \eqref{eq:actual-layer-def}.
As $E$ is an $m$-set not belonging to $\F_m$, it is not in $\F$.
Hence, $E\in\mathcal H^k(m,s,\ell;A)\setminus\F=\M(A)$.

Also define $X_U^\star=\F_m\cap \{P\cup\{z\}:P\in\binom{A_0}{m-1},\ z\in R\}$ and $\Rcal_U=\{E\in\X(A):E\cap U=\emptyset\}$.
Clearly, $X_U^\star\subseteq\Rcal_U$.

The proof is a signed comparison to the family $\mathcal H^k(m,s,\ell;A)$.
The sets in $\X(A)$ contribute positively to $ |\F|-|\mathcal H^k(m,s,\ell;A)| $, while the sets in $\M(A)$ contribute negatively.
Since
\[
        |\F|-|\mathcal H^k(m,s,\ell;A)|=|\X(A)|-|\M(A)|,
\]
it suffices to show that every possible positive contribution forces at least as many negative contributions, with strict inequality unless no $A$-extra set exists.

We first prove an estimate used in the next three structural claims.

\begin{claim}\label{cl:defect-coarse-bounds}
        There is a constant $C_0=C_0(m,k)$ such that $|\X(A)|\le C_0cs^{m-1}$ and $d\le C_0c$.
\end{claim}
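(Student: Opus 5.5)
The plan is to bound $|\X(A)|$ directly by counting the sets of $[n]$ that can possibly be $A$-extra, and then to bound $d$ through the assumption \eqref{eq:defect-nonstrict} together with $Z_U\subseteq\M(A)$.

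\emph{Bounding $|\X(A)|$.} By \eqref{eq:defect-type-formula}, every $S\in\X(A)$ has size $m+j$ with $0\le j\le L$ and satisfies $|S\cap R|=(k+1)j+\sigma(S)\ge (k+1)j+1$. In particular every $A$-extra set meets $R$. We count all $(m+j)$-sets whose intersection with $R$ has size $t\ge(k+1)j+1$ and at most $m+j$. Their number is
\[
\sum_{t}\binom{|R|}{t}\binom{a}{m+j-t}=O_{m,k}\Bigl(\sum_t c^t s^{m+j-t}\Bigr),
\]
since $|R|=bc+1=O(c)$ and $a\le ms$. Because $c\le s$, each term with $t\ge1$ is at most $c\,s^{m+j-1}\cdot(c/s)^{t-1}$.

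For $j=0$ this already gives $O(cs^{m-1})$.

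For $j\ge1$ the exponent of $s$ can exceed $m-1$, so we use $t\ge(k+1)j+1$ instead. This gives the bound $c^{(k+1)j+1}s^{m-kj-1}$, which equals $cs^{m-1}\cdot (c^{k+1}/s^k)^j$. From $c\le\alpha' s^{k/(k+1)}$ we get $c^{k+1}\le s^k$, so this is at most $cs^{m-1}$. Summing over the boundedly many $j$ and $t$ yields $|\X(A)|\le C_0' cs^{m-1}$.

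\emph{Bounding $d$.} Every $x\in U$ has $d_H(x)\le\theta s^{m-1}$. The number of $m$-subsets of $A$ through $x$ is $\binom{a-1}{m-1}\ge\kappa s^{m-1}$ by \eqref{eq:defect-kappa-binomial}, since $a-1\ge ms/2$. So at least $(\kappa-\theta)s^{m-1}\ge\frac{\kappa}{2}s^{m-1}$ members of $Z_U$ contain $x$. Each member of $Z_U$ contains at most $m$ vertices of $U$, so double counting gives
\[
|Z_U|\ge \frac{d\kappa}{2m}s^{m-1}.
\]
Now $Z_U\subseteq\M(A)$ and \eqref{eq:defect-nonstrict} give
\[
\frac{d\kappa}{2m}s^{m-1}\le|\M(A)|\le|\X(A)|\le C_0'cs^{m-1},
\]
hence $d\le 2mC_0'c/\kappa$. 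Taking $C_0=\max\{C_0',2mC_0'/\kappa\}$ finishes the proof.

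The main care is in the $j\ge1$ counting: the exponent tradeoff requires the upper window bound on $c$. Everything else is routine.
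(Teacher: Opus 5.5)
Your proof is correct and follows the paper's argument. You count the possible $A$-extra sets layer by layer according to how they meet $R$: the paper indexes by $x=|E\cap A|$, you by $t=|E\cap R|$. Both use $t\ge (k+1)j+1$ together with $c^{k+1}\le s^k$ to get $|\X(A)|=O(cs^{m-1})$, and both then bound $d$ by double counting $Z_U$ against $|Z_U|\le|\M(A)|\le|\X(A)|$. The only cosmetic difference is in lower-bounding $\binom{a-1}{m-1}$: you use $\kappa$ and $a-1\ge ms/2$, which needs $c/s\le\eta$ from \cref{cl:window-estimates}, whereas the paper uses $a\ge s-1$ and $\chi_m$.
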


\begin{proof}
        We first show that $|\X(A)|\le C_Xcs^{m-1}$ for some constant $C_X=C_X(m,k)$.  In the $m$-layer, every
        $A$-extra set contains a vertex of $R$, and hence there are at most
        $|R|\binom n{m-1}=O_{m,k}(cs^{m-1})$ such sets.  In layer $m+j$,
        where $1\le j\le L$, an $A$-extra set of type $x=|A\cap E|$
        satisfies $0\le x\le\tau_j-1$, and the number of sets of this type
        is $O_{m,k}(s^xc^{q_{j,x}})$.  Since
        $q_{j,x}-1=m+j-x-1\ge(k+1)j$ and
        $x+\frac{k}{k+1}(q_{j,x}-1)\le m-1$, the upper bound on $c$ and
        $\alpha'<1$ give
        \[
                s^xc^{q_{j,x}}
                =c\,s^xc^{q_{j,x}-1}
                \le cs^{m-1}.
        \]
        There are only finitely many admissible pairs $(j,x)$, so the
        asserted bound follows for a suitable $C_X=C_X(m,k)$.

        We now bound $d=|U|$.  Since $c<s$ and $k\le m-1$, we have
        $a=ms-kc-1\ge s-1$, and, for $s_0'$ sufficiently large,
        $\binom{a-1}{m-1}\ge\chi_m s^{m-1}$.  Hence, by the choice of
        $\theta$, each $u\in U$ belongs to at least
        $(\chi_m/2)s^{m-1}$ members of $\binom Am\setminus\F_m$.
        Counting incidences between $U$ and these missing $m$-sets, and
        observing that each member of $Z_U$ is counted at most $m$ times,
        gives
        \[
                |Z_U|\ge\frac{\chi_m}{2m}d s^{m-1}.
        \]
        Since $Z_U\subseteq\M(A)$, \eqref{eq:defect-nonstrict} and the
        first part of the proof yield
        \[
                \frac{\chi_m}{2m}d s^{m-1}
                \le |Z_U|
                \le |\M(A)|
                \le |\X(A)|
                \le C_Xcs^{m-1}.
        \]
        Thus $d=O_{m,k}(c)$.  Choosing $C_0=C_0(m,k)$
        sufficiently large proves the claim.
\end{proof}

The remaining proof needs the following three structural claims.
\cref{cl:defect-capture} shows that all extra sets, except for the star-like family $X_U^\star$, are controlled by a matching and the low-degree part $U$.
\cref{cl:defect-completion} shows that, if there is a sufficiently large matching of extra sets avoiding $U$, then many canonical sets must be missing.
\cref{cl:defect-clean-center} uses the maximality of $g(A)$ to rule out $U\ne\emptyset$ by switching low-degree center vertices with vertices of $R$.

\begin{claim}\label{cl:defect-capture}
        We have $|\X(A)\setminus X_U^\star|\le M_1(\nu(\Rcal_U)+d)E_k(s,c)$.
        Moreover, if $U=\emptyset$, then
        \[
                |\X(A)|\le M_1\nu(\X(A))B_k(s,c).
        \]
\end{claim}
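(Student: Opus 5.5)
We will use a maximal-matching covering argument in $\Rcal_U$. Fix a maximum matching $\mathcal J\subseteq\Rcal_U$, so $|\mathcal J|=\nu(\Rcal_U)$, and let $W=V(\mathcal J)$. Every $A$-extra set has at most $m+L$ elements, so $|W|=O_{m,k}(\nu(\Rcal_U))$. By maximality, every member of $\Rcal_U$ meets $W$. Every member of $\X(A)\setminus\Rcal_U$ meets $U$ by definition. Hence every $E\in\X(A)\setminus X_U^\star$ satisfies one of the following: it meets the set $W\cup U$ of size $O(\nu(\Rcal_U)+d)$, and it is not of the form $P\cup\{z\}$ with $P\in\binom{A_0}{m-1}$ and $z\in R$. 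It therefore suffices to bound, for a single vertex $v$, the number of $A$-extra sets containing $v$ that are not star sets of $X_U^\star$. We aim for the bound $O_{m,k}(E_k(s,c))$. Summing over $v\in W\cup U$ then gives $M_1(\nu(\Rcal_U)+d)E_k(s,c)$, once $M_1$ is large in terms of $m,k$.

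The per-vertex count goes layer by layer using the type formula \eqref{eq:defect-type-formula}. We split according to the layer of $E$.

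\emph{Layer $m$ with $\sigma(E)\ge2$.} Then $|E\cap R|\ge2$, so $E$ is an $m$-set meeting $R$ in at least two points. The number of such sets through a fixed vertex $v$ is $O(c\cdot s^{m-2}+c^2s^{m-3}+\dots)$, which is $O(cs^{m-2})$ since $c\le s$. This is at most $E_k(s,c)$ up to a constant.

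\emph{Layer $m$ with $\sigma(E)=1$.} Then $E=P\cup\{z\}$ with $|P|=m-1$, $P\subseteq A$ and $z\in R$. Such a set is not in $X_U^\star$ exactly when $P$ meets $U$; note that if $E\in\F_m$ and $P\subseteq A_0$, then $E\in X_U^\star$. When $v\in U$, the count through $v$ is $|R|\binom{a}{m-2}=O(cs^{m-2})$. When $v\in W\setminus U$, we still need $P\cap U\neq\emptyset$, and the count is $O(d\cdot c\cdot s^{m-3})$. This is absorbed since $d=O(c)$ by \cref{cl:defect-coarse-bounds} and $c^2s^{m-3}\le cs^{m-2}$. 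Alternatively, these sets can be charged to the vertex of $U$ they contain, which is how the $d$ term arises.

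\emph{Layers $m+j$ with $1\le j\le L$.} A set of type $x=|E\cap A|\le\tau_j-1$ containing a fixed $v$ numbers $O(s^{x}c^{m+j-x-1})$, taking the worse of $v\in A$ and $v\in R$. Summing over $x$ gives exactly the double sum in $E_k(s,c)$. The $s^{m-2}$ term of $E_k(s,c)$ covers the remaining bounded cases.

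For the case $U=\emptyset$ we have $A_0=A$ and $\Rcal_U=\X(A)$, so $\nu(\Rcal_U)=\nu(\X(A))$. The first part then bounds everything except $X_U^\star$ by $M_1\nu(\X(A))E_k(s,c)$. The star family $X_U^\star$ consists of sets $P\cup\{z\}$ with $z\in R$. Each such set meets $W$, by maximality of $\mathcal J$ in $\X(A)$.

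\emph{Stars through a vertex of $W\cap R$.} Those with $z\in W\cap R$ number at most $|W|\binom{a}{m-1}=O(\nu\, s^{m-1})$.

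\emph{Stars through a vertex of $W\cap A$.} Those with $P\cap W\neq\emptyset$ number $O(|W|\,|R|\,s^{m-2})=O(\nu\, cs^{m-2})$.

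Together these give $|X_U^\star|=O(\nu(\X(A))s^{m-1})$. Adding the two parts yields $|\X(A)|\le M_1\nu(\X(A))(s^{m-1}+E_k)=M_1\nu(\X(A))B_k(s,c)$.

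The main obstacle I expect is the layer-$m$, $\sigma=1$ bookkeeping. One must check that sets with $P\cap U\ne\emptyset$ are charged correctly, which needs $d=O(c)$. One must also check that the $s^{m-1}$-order star sets are genuinely excluded from the first bound and appear only in the $U=\emptyset$ bound through $B_k$. The higher-layer counts should just match the exponents built into $E_k$.
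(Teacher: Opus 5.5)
Your proposal is correct and follows essentially the paper's argument: a maximum matching $\mathcal J$ in $\Rcal_U$, covering $\X(A)\setminus X_U^\star$ by $V(\mathcal J)\cup U$, per-vertex degree bounds of order $E_k(s,c)$ layer by layer, and, when $U=\emptyset$, counting $X_U^\star$ according to whether its members meet $V(\mathcal J)\cap R$ or $V(\mathcal J)\cap A$.

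The only difference is in the bookkeeping. The paper bounds degrees inside $\Rcal_U\setminus X_U^\star$, where every $m$-set has at least two vertices of $R$, and charges all sets meeting $U$ to $U$; this is your ``alternative'' remark, and it avoids needing $d=O(c)$. In your main route, for $v\in V(\mathcal J)\cap R$ the $\sigma=1$ count is $O(ds^{m-2})$ rather than $O(dcs^{m-3})$. This is still $O(cs^{m-2})$, since $d\le C_0c$ by \cref{cl:defect-coarse-bounds}.
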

\begin{claim}\label{cl:defect-completion}
        If $\nu(\Rcal_U)>d$, then $|\M(A)\setminus Z_U|\ge M_3(\nu(\Rcal_U)-d)B_k(s,c)$.
\end{claim}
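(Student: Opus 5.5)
Fix a maximum matching $\mathcal J=\{E_1,\ldots,E_t\}$ in $\Rcal_U$, $t=\nu(\Rcal_U)>d$. The aim is to show that, for most ways of extending $\mathcal J$ to a partition of $[n]$ into $s$ blocks, some block is an $A$-missing set avoiding $U$. Since $\nu(\F)<s$, every partition of $[n]$ into $s$ sets contains a block not in $\F$. By the proposition, the $A$-defects of any $s$ blocks partitioning $[n]$ sum to $1$. So the $E_r$, which have total defect $D(\mathcal J)\ge t$, must be compensated by blocks of negative defect. The completion should therefore be built from canonical sets, so that the block of the partition lying outside $\F$ is forced to be $A$-missing.

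\textbf{Step 1: reduce to $t-d$ extra sets.} I would first discard the at most $d$ members of $\mathcal J$ needed to keep track of $U$. More precisely, I would cover $U$ by $d$ sets of $\F_m$ lying inside $A$ and disjoint from $V(\mathcal J)$, chosen greedily as in \cref{lem:dense-defect}. This is possible because $|V(\mathcal J)|=O(c)$ by \eqref{eq:defect-R-capacity} and $d=O(c)$ by \cref{cl:defect-coarse-bounds}. If a vertex of $U$ has too small a degree for this, I would instead sacrifice one member of $\mathcal J$ for it. Either way we keep a submatching $\mathcal J'$ of size at least $t-d$ whose total defect $D'$ exceeds its number of blocks by a controlled amount.

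\textbf{Step 2: complete the partition.} The leftover vertex set $W$ consists of about $ms$ vertices of $A_0$ and about $bc-b\omega-D'$ vertices of $R$. I would partition $W$ using blocks of the form $e\cup T$, where $e\in\binom{A_0}{p}$ and $T\subseteq R$ has $|T|\in\{b,b+1\}$. Such a block is an $(m+1)$- or $(m+2)$-set with $|S\cap A|=p=\tau_1$, so it is canonical of defect $0$ or $-k$. The remaining vertices go into $m$-subsets of $A_0$, which also have defect $0$. The number $h$ of blocks with $|T|=b+1$ is fixed by the defect balance, and it is of order $t-d$.

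If no such partition is a perfect partition of $W$ into members of $\F$, then combining the $m$-sets inside $A_0$ and the mixed blocks gives an $s$-matching in $\F$, which is impossible. So one of the two pieces fails. If the $m$-sets inside $A_0$ fail, \cref{lem:dense-defect} applies with minimum degree $\theta s^{m-1}$ off $U$, and it yields $\zeta s^m\ge M_3 t B_k$ missing $m$-sets inside $A_0$, using $cB_k\le\eta s^m$ from \cref{cl:window-estimates}. Otherwise the mixed part fails; there I would apply \cref{lem:p-ordered-local-blocker} with $P$ a random $pu$-subset of $A_0$, $X=W\cap R$ and $u\approx c$. It gives at least $\gamma(h+1)u^m$ missing blocks. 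Averaging over the choice of $P$, this scales to about $(t-d)s^pc^{b}\cdot c^{-1}$-type counts, roughly $(t-d)s^{m-k}c^{k}$. Then $s^{m-k}c^k\ge M_3\eta^{-1}B_k$ from \cref{cl:window-estimates} gives the claimed bound. These missing sets avoid $U$, so they lie in $\M(A)\setminus Z_U$.

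\textbf{Main obstacle.} The hard part is the bookkeeping in Step 2. One must check that $h\le\rho_{\ref{lem:p-ordered-local-blocker}}u$, which should follow from $\rho$, $\alpha_1$ and $D'+b\omega\le bc+1$. One must also check that the layer sizes are integral and mutually compatible. Finally, the averaging over random $P$ must count each missing set only $O(1)\cdot$(number of placements) times, without losing the linear factor $(t-d)$. I would handle this last point by splitting $\mathcal J'$ into disjoint batches of bounded size, each giving independent missing sets.
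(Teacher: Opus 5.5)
\textbf{The gap is in Step~1, the treatment of $U$.} By definition, $U$ consists of the vertices $x\in A$ with $d_H(x)\le\theta s^{m-1}$, and nothing prevents $d_H(x)=0$.

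\begin{itemize}
\item Such a vertex lies in no member of $\F_m$ inside $A$, so no greedy argument can cover it.
\item Any $m$-subset of $A$ through it that is absent from $\F$ belongs to $Z_U$. This claim explicitly discards $Z_U$.
\end{itemize}

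Your fallback of ``sacrificing a member of $\mathcal J$'' does not repair this. Dropping $E$ leaves $u$ uncovered, because all your Step~2 blocks live in $A_0\cup R$. Replacing $E$ by $E\cup\{u\}\in\F$ (by upward closure) does cover $u$. However, $\sigma(E\cup\{u\})=\sigma(E)-(k+1)$, so each vertex of $U$ then absorbs $k+1$ units of defect rather than one.

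The defects of any $s$ blocks partitioning $[n]$ sum to $1$. So your canonical completion would have to absorb $D-(k+1)d-1$ units. This is negative when, say, every member of $\mathcal J$ has defect $1$ and $d<\nu(\Rcal_U)\le (k+1)d$. In that case no completion by blocks of defect $\le 0$ has the right number of blocks, and the argument yields nothing. So the exact loss $\nu(\Rcal_U)-d$ is not reached.

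\textbf{The missing idea is to put $U$ on the second side of the mixed ordered partition.} The paper takes $C=(U\cup T\cup B_R)\setminus P_T$, where $T$ is a random $\lambda$-subset of $N=A_0\setminus V_A(\mathcal J)$ and $P_T\subseteq T$ has size $pu$. A vertex of $A$ placed in some $Y_r$ does three things:
\begin{itemize}
\item it keeps $e_r\cup Y_r$ canonical, since that block already has $p=\tau_1$ vertices of $A$;
\item it absorbs exactly one unit of defect;
\item if the block is missing from $\F$, it has size $m+1$ or $m+2$, so it is never in $Z_U$.
\end{itemize}
This is exactly where the $-d$ comes from.

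The same device resolves the integrality problem you only flag. With $X=W\cap R$, each mixed block has defect $0$ or $-k$, so you would need $k\mid D'-1$. No submatching achieves this if, for example, every member of $\mathcal J$ has defect $k\ge 2$.

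Moreover, $h\le\rho_{\ref{lem:p-ordered-local-blocker}}u$ does not follow from $D'+b\omega\le bc+1$. If $\omega=0$ and $D'\approx bc$, then $h\approx (k+1)c/k>c$ and $u<0$.

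The paper instead caps $h=\min\{\lfloor\rho c\rfloor,\lfloor(D-d-1)/k\rfloor\}$. The $\lambda-pu=D-d-1-kh$ leftover vertices of $T\setminus P_T$ then absorb the remaining defect one unit at a time. This gives $h+1\ge\alpha_1(\nu(\Rcal_U)-d)$. The factor $h+1$ in \cref{lem:p-ordered-local-blocker} then supplies the linear dependence directly, after a single double count over good $T$, so no batching of $\mathcal J'$ is needed.
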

\begin{claim}\label{cl:defect-clean-center}
        We have $U=\emptyset$.
\end{claim}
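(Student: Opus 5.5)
We argue by contradiction: assume $d=|U|\ge1$ and build a new center $A'$ with $g(A')>g(A)$, contradicting the choice of $A$. The idea is to swap the low-degree vertices of $U$ out of the center and swap in vertices of $R$ that behave like high-degree center vertices with respect to $\F_m$.

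\textbf{Step 1: a lower bound on the surplus.} Combining the two claims above with \eqref{eq:defect-nonstrict}, we get
\[
|\X(A)|\ge|\M(A)|\ge|Z_U|+|\M(A)\setminus Z_U|.
\]
We already know $|Z_U|\ge\frac{\chi_m}{2m}ds^{m-1}$. If $\nu(\Rcal_U)>d$, \cref{cl:defect-completion} gives $|\M(A)\setminus Z_U|\ge M_3(\nu(\Rcal_U)-d)B_k(s,c)$. On the other side, \cref{cl:defect-capture} gives
\[
|\X(A)|\le|X_U^\star|+M_1(\nu(\Rcal_U)+d)E_k(s,c).
\]
Since $M_1\ll M_3$ and $E_k\le\eta s^{m-1}$, the term $M_1\nu(\Rcal_U)E_k$ is absorbed whenever $\nu(\Rcal_U)$ is large, and the term $M_1 dE_k\le M_1\eta ds^{m-1}$ is small compared with $\frac{\chi_m}{2m}ds^{m-1}$. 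This leaves
\[
|X_U^\star|\ge\frac{\chi_m}{4m}ds^{m-1}.
\]
So the extra sets must include many star-like $m$-sets $P\cup\{z\}$ with $P\in\binom{A_0}{m-1}$ and $z\in R$.

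\textbf{Step 2: find good vertices of $R$.} Since $|R|=bc+1$, averaging gives some $z\in R$ whose link $\{P\in\binom{A_0}{m-1}:P\cup\{z\}\in\F_m\}$ has size at least $\Omega(ds^{m-1}/c)$. That bound alone is not enough to beat vertices of degree about $\binom{a}{m-1}$. We need $d$ vertices $z_1,\dots,z_d\in R$, each with link of size at least $(1-o(1))\binom{|A_0|}{m-1}$ or at least larger than $\theta s^{m-1}$ plus error. To get these, use the matching constraint $\nu(\F)<s$: if every $z\in R$ had link density bounded away from full, the sets of $X_U^\star$ together with $H$ would let us assemble many disjoint $A$-extra sets avoiding $U$. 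Concretely, we would use \cref{lem:dense-defect} applied to $H[A_0]$, whose minimum degree exceeds $\theta s^{m-1}$, to complete a partial matching into an $s$-matching, or else to force $\Omega(s^m)$ missing sets, which contradicts \cref{cl:defect-coarse-bounds} since $cB_k\le\eta s^m$. So the star part must be concentrated on at least $d$ vertices $z$ whose links are nearly complete in $\binom{A_0}{m-1}$. The expected count is $|X_U^\star|\approx d\binom{a}{m-1}$ up to errors, and it is at least $\frac{\chi_m}{4m}ds^{m-1}$.

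\textbf{Step 3: the swap.} Set $A'=(A\setminus U)\cup\{z_1,\dots,z_d\}$. Then
\[
g(A')\ge|H[A_0]|+\sum_i|\text{link}(z_i)\cap\tbinom{A_0}{m-1}|,
\]
ignoring the nonnegative contribution of $m$-sets with two or more $z_i$. Also
\[
g(A)\le|H[A_0]|+\sum_{u\in U}d_H(u)\le|H[A_0]|+d\theta s^{m-1}.
\]
Since $\theta\ll\chi_m,\kappa$ and each $z_i$ has link much larger than $\theta s^{m-1}$, we get $g(A')>g(A)$, a contradiction. Hence $U=\emptyset$.

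\textbf{Main obstacle.} The hardest part is Step 2: showing that the star surplus is concentrated on at least $d$ vertices of $R$, each of large degree, rather than spread thinly over $R$. We would first handle this by a matching argument. Take a maximal matching of high-link vertices of $R$. Assuming fewer than $d$ of them, extend by greedy selection plus \cref{lem:dense-defect} to an $s$-matching using $\Omega(d)$ sets from $X_U^\star$. The accounting must use the identity $\sum\sigma(P_r)=1$ to see that each extra set's positive defect must be compensated by missing canonical sets. If that fails, we would fall back on applying \cref{cl:defect-completion} to the star sets directly, since $X_U^\star\subseteq\Rcal_U$.
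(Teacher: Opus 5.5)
\textbf{There is a genuine gap in Step 2, which you yourself flag as the main obstacle.}

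Step 1 is correct. Splitting $M_1(\nu(\Rcal_U)+d)E_k(s,c)$ into $M_1(\nu(\Rcal_U)-d)_+E_k(s,c)$, which \cref{cl:defect-completion} pays for, and $2M_1dE_k(s,c)\le 2M_1\eta ds^{m-1}$ does give $|X_U^\star|\ge\frac{\chi_m}{4m}ds^{m-1}$. The comparison of $g$ in Step 3 is also the right one.

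The mechanism you propose for Step 2 does not work, and Step 2 carries the whole content of the claim.
\begin{itemize}
\item \cref{lem:dense-defect} applied to $H[A_0]$ only covers vertices of $A_0$. It therefore cannot complete a partial matching of star sets to an $s$-matching, which must also absorb $U$ and the unused vertices of $R$.
\item That completion needs the mixed-partition lemma and the defect budget $D>d$. This is exactly \cref{cl:defect-completion}, which says nothing when $\nu(\Rcal_U)\le d$.
\item ``Links bounded away from full'' is not the relevant dichotomy. The conclusion that at least $d$ vertices of $R$ each have nearly complete links is neither justified by your sketch nor needed.
\item Step 3 silently assumes $d\le|R|$, but \cref{cl:defect-coarse-bounds} only gives $d\le C_0c$. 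When $d>|R|$, swap only $|R|$ vertices of $U$ for all of $R$. Then $T(R)\supseteq X_U^\star$, and your Step 1 bound already beats the loss $|R|\theta s^{m-1}$.
\end{itemize}

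\textbf{The missing idea.} Do not try to control individual links. Control how many vertices of $R$ the star family $X_U^\star$ can be spread over.
\begin{enumerate}
\item Take a maximum matching $\mathcal J_0=\{P_r\cup\{z_r\}:1\le r\le\mu\}$ in $X_U^\star$ and put $S_\mu=\{z_1,\dots,z_\mu\}$.
\item By maximality, every $P\cup\{z\}\in X_U^\star$ with $z\notin S_\mu$ has $P\cap\bigcup_rP_r\neq\emptyset$. So there are only $O_{m,k}(\mu cs^{m-2})$ such sets.
\item Let $S$ be a $d$-subset of $R$ containing as much of $S_\mu$ as possible.
\item If $\mu\le d$, then $T(S)$ contains all of $X_U^\star$ up to $O(\mu cs^{m-2})\le O(dE_k(s,c))$ sets.
\item If $\mu>d$, the at most $(\mu-d)\binom{|A_0|}{m-1}=O((\nu(\Rcal_U)-d)B_k(s,c))$ star sets through $S_\mu\setminus S$ are paid for by \cref{cl:defect-completion}. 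This works because $\nu(\Rcal_U)\ge\mu>d$ and $M_3$ is large. The $O(\mu cs^{m-2})$ error splits the same way.
\item In both cases, keep $|\M(A)\setminus Z_U|$ in the accounting instead of discarding it as in your Step 1. This gives the aggregate bound $|\X(A)\cap T(S)|\ge|Z_U|-O(dE_k(s,c))$.
\item Since $|Z_U|\ge\frac{\chi_m}{2m}ds^{m-1}$, this beats the loss $\sum_{u\in U}d_H(u)\le d\theta s^{m-1}$ in your Step 3, with no per-vertex statement at all.
\end{enumerate}

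Your fallback of applying \cref{cl:defect-completion} to the star sets only covers the case $\mu>d$. The case $\mu\le d$ needs the observation that the star sets essentially live on $S_\mu$.
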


Now we are ready to derive \cref{thm:reduced-all} from
Claims~\ref{cl:defect-capture}, \ref{cl:defect-completion}
and~\ref{cl:defect-clean-center}.
By \cref{cl:defect-clean-center}, we have $U=\emptyset$.
Hence, $d=0$, $Z_U=\emptyset$, and $\Rcal_U=\X(A)$.
Suppose first that $\X(A)\ne\emptyset$.
Thus, $\nu(\X(A))\ge1$.
Therefore, $\nu(\Rcal_U)=\nu(\X(A))>0=d$.  By
\cref{cl:defect-completion,cl:defect-capture} and $M_3>M_1$,
\[
        |\M(A)|
        \ge M_3\nu(\X(A))B_k(s,c)
        > M_1\nu(\X(A))B_k(s,c)
        \ge |\X(A)|,
\]
contradicting \eqref{eq:defect-nonstrict}.
Hence, $\X(A)=\emptyset$.
Thus, \eqref{eq:defect-nonstrict} gives $\M(A)=\emptyset$, and therefore $\F=\mathcal H^k(m,s,\ell;A)$.
Indeed, $\F$ has no member below size $m$, the family $\mathcal H^k(m,s,\ell;A)$ has no member below size $m$, and every layer above $m+L$ is contained in $\mathcal H^k(m,s,\ell;A)$.
Thus, all possible positive differences are the $A$-extra sets counted by $\X(A)$, and all negative differences are the $A$-missing sets counted by $\M(A)$.

This proves the desired upper bound and the equality statement, assuming Claims~\ref{cl:defect-capture}, \ref{cl:defect-completion} and~\ref{cl:defect-clean-center}.

\subsection{Proof of Claim~\ref{cl:defect-capture}: Capturing A-extra surplus}

Let $\mathcal J\subseteq\Rcal_U$ be a maximum matching.
Consider first the $m$-layer.  If
$F\in(\Rcal_U\setminus X_U^\star)\cap\binom{[n]}m$, then $F$
avoids $U$ and contains a vertex of $R$.  If it
contained exactly one vertex of $R$, its other $m-1$ vertices
would lie in $A_0$, so $F\in X_U^\star$, a contradiction.
Thus every such $F$ contains at least two vertices of $R$.

We next bound degrees in $\Rcal_U\setminus X_U^\star$.  Fix
$v\in[n]$.  In the $m$-layer, the number of members containing
$v$ is $O_{m,k}(cs^{m-2})$: if $v\in R$, choose another vertex
of $R$ and then $m-2$ further vertices; if $v\in A_0$, choose
two vertices of $R$ and then $m-3$ further vertices; and if
$v\in U$, no member is counted.

Now consider layer $m+j$, where $1\le j\le L$, and a type
$x=|F\cap A|$ with $0\le x\le\tau_j-1$.
If $v\in R$, then the number of members of this type containing $v$ is $O_{m,k}(s^xc^{q_{j,x}-1})$.
If $v\in A_0$, then $x\ge1$, and hence the corresponding number is $O_{m,k}(s^{x-1}c^{q_{j,x}})=O_{m,k}(s^xc^{q_{j,x}-1})$.
Again, no member contains a fixed vertex of
$U$.
The displayed quantities are summands of $E_k(s,c)$, and
there are only finitely many admissible types.  Hence there is a constant $C_1=C_1(m,k)$ such that
\begin{equation}\label{eq:defect-local-XU-complement-degree}
        \Delta(\Rcal_U\setminus X_U^\star)
        \le C_1E_k(s,c).
\end{equation}

The same argument bounds the degree of a fixed $u\in U$ in the
whole family $\X(A)$.  In the $m$-layer, an $A$-extra set
containing $u$ must also meet $R$, so there are
$O_{m,k}(cs^{m-2})$ choices.  In layer $m+j$, the condition
$u\in F\cap A$ forces $x\ge1$, and the preceding estimate for a
fixed vertex of $A_0$ applies.  Increasing $C_1$ if necessary,
we therefore have
\begin{equation}\label{eq:defect-local-U-degree}
        d_{\X(A)}(u)\le C_1E_k(s,c)
\end{equation}
for every $u\in U$.

Since $\mathcal J$ is maximum in $\Rcal_U$, every member of
$\Rcal_U$ meets $V(\mathcal J)$.  As every member of
$\mathcal J$ has size at most $m+L$,
\eqref{eq:defect-local-XU-complement-degree} gives
\[
        |\Rcal_U\setminus X_U^\star|
        \le C_1(m+L)\nu(\Rcal_U)E_k(s,c).
\]
Every member of
$(\X(A)\setminus X_U^\star)\setminus\Rcal_U$ meets $U$, so
\eqref{eq:defect-local-U-degree} gives
\[
        |(\X(A)\setminus X_U^\star)\setminus\Rcal_U|
        \le C_1dE_k(s,c).
\]
Therefore,
\[
        |\X(A)\setminus X_U^\star|
        \le
        C_1\bigl((m+L)\nu(\Rcal_U)+d\bigr)E_k(s,c)
        \le
        M_1(\nu(\Rcal_U)+d)E_k(s,c),
\]
by the hierarchy.  This proves the first assertion.

Suppose now that $U=\emptyset$.  Then $A_0=A$ and
$\Rcal_U=\X(A)$.  Let $\mathcal J$ be a maximum matching in
$\X(A)$, and write $T_R=V(\mathcal J)\cap R$ and
$T_A=V(\mathcal J)\cap A$.
Since $\mathcal J$ is maximal, every member of $X_U^\star$
meets $T_A\cup T_R$, and
$|T_A|+|T_R|\le(m+L)\nu(\X(A))$.  The members of
$X_U^\star$ meeting $T_R$, and those avoiding $T_R$ but meeting
$T_A$, number at most
\[
        \begin{aligned}
                |T_R|\binom{|A|}{m-1}
                 & =O_{m,k}\bigl(\nu(\X(A))s^{m-1}\bigr),  \\
                |T_A||R|\binom{|A|}{m-2}
                 & =O_{m,k}\bigl(\nu(\X(A))cs^{m-2}\bigr),
        \end{aligned}
\]
respectively.
The part outside $X_U^\star$ is bounded by the preceding raw
estimate with $d=0$.  Since $cs^{m-2}\le E_k(s,c)$ and
$B_k(s,c)=s^{m-1}+E_k(s,c)$, the hierarchy gives
\[
        |\X(A)|
        \le M_1\nu(\X(A))B_k(s,c).
\]

\subsection{Proof of Claim~\ref{cl:defect-completion}: Completing A-extra matchings}

Assume $\nu(\Rcal_U)>d$, and let $\mathcal J\subseteq\Rcal_U$ be a maximum matching.
Thus, $|\mathcal J|=\nu(\Rcal_U)$.
Write $V_A(\mathcal J)=A\cap V(\mathcal J)$.
Let $\omega=\omega(\mathcal J)$ and $D=D(\mathcal J)$ denote the total layer excess and the total $A$-defect of $\mathcal J$, respectively.
Set
\[
        h=\min\left\{\lfloor\rho c\rfloor,
        \left\lfloor\frac{D-d-1}{k}\right\rfloor\right\},
        \qquad
        u=c-\omega-h,
        \qquad
        N=A_0\setminus V_A(\mathcal J).
\]
Define $t=s-\nu(\Rcal_U)-u$ and
$\lambda=pu+D-d-1-kh$.
Equivalently,
$t=s-c-\nu(\Rcal_U)+\omega+h$ and
$\lambda=pc-1-d+D-p\omega-mh$.
These definitions encode the two balances used in the completion:
$|\mathcal J|+t+u=s$ and
$\lambda-pu=D-d-1-kh$.
After keeping $\mathcal J$, a perfect matching in $H[N\setminus T]$
will contribute $t$ $m$-sets, while the remaining vertices will be
divided into $u$ mixed blocks, $h$ of which have size $b+1$ on the
second side.

The following claim collects the conditions needed for this completion
attempt.
\begin{claim}\label{cl:completion-bookkeeping}
        With the notation above, the following statements hold.
        \begin{enumerate}[label=\textup{(\roman*)},leftmargin=2.2em]
                \item
                      We have $\omega\le(1-\xi)c$.

                \item
                      We have $u\ge \xi c/2$, $h\le u$,
                      $h\le
                              \rho_{\ref{lem:p-ordered-local-blocker}}u$ and
                      $u\ge C_{\ref{lem:p-ordered-local-blocker}}$.

                \item
                      We have
                      $h+1\ge\alpha_1(\nu(\Rcal_U)-d)$.

                \item
                      We have $|N|=mt+\lambda$ and
                      $\lambda-pu=D-d-1-kh\ge0$.
                      Moreover, $0\le\lambda\le(m+1)c$,
                      $|t-s|\le(b+4)c$, and
                      $s/2\le t\le3s/2$.
        \end{enumerate}
\end{claim}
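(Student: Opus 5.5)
The plan is to derive all four items directly from the identities \eqref{eq:defect-J-support} and \eqref{eq:defect-R-capacity}, together with $D(\mathcal J)\ge|\mathcal J|=\nu(\Rcal_U)>d\ge0$ and the hierarchy of constants.

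\emph{Step 1: item (i).} This is the only step that is not pure bookkeeping, and I expect it to be the main obstacle. The bound $D+b\omega\le bc+1$ alone only gives $\omega\le c$. To gain a factor, I will use that each $E\in\mathcal J$ has $j(E)\le L$. Hence the number of members with $j(E)\ge1$ is at least $\omega/L$, so $D\ge|\mathcal J|\ge\omega/L$. Substituting into \eqref{eq:defect-R-capacity} gives $(b+1/L)\omega\le bc+1$, that is,
\[
\omega\le\frac{L(bc+1)}{bL+1}=c-\frac{c-L}{bL+1}.
\]
Since $c\ge\beta's^{(k-1)/k}\ge\beta'$ and $\beta'$ is large in the hierarchy (this also covers $k=1$), we have $c-L\ge c/2$. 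Therefore $\omega\le c-c/(2(bL+1))=(1-\xi)c$.

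\emph{Step 2: items (ii) and (iii).}
\begin{itemize}
\item First, $h\ge0$, because $D\ge\nu(\Rcal_U)\ge d+1$.
\item By (i), $u=c-\omega-h\ge\xi c-\rho c\ge\xi c/2$, using $\rho\ll\xi$.
\item Next, $h\le\rho c\le\rho_{\ref{lem:p-ordered-local-blocker}}\xi c/2\le\rho_{\ref{lem:p-ordered-local-blocker}}u$, again by $\rho\ll\xi,\rho_{\ref{lem:p-ordered-local-blocker}}$. Then $h\le u$ follows from $\rho_{\ref{lem:p-ordered-local-blocker}}\le1$.
\item Finally, $u\ge\xi c/2\ge C_{\ref{lem:p-ordered-local-blocker}}$ because $\beta'$ is large.
\end{itemize}
For (iii) I split according to which term attains the minimum in the definition of $h$.
\begin{itemize}
\item If $h=\lfloor(D-d-1)/k\rfloor$, then $h+1\ge(D-d)/k\ge(\nu(\Rcal_U)-d)/k\ge\alpha_1(\nu(\Rcal_U)-d)$, since $\alpha_1\le 1/(k+1)$.
\item If $h=\lfloor\rho c\rfloor$, then $h+1>\rho c$. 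Every matching in $\X(A)$ has size at most $|R|=bc+1$, so $\alpha_1(\nu(\Rcal_U)-d)\le\frac{\rho}{4b}(bc+1)\le\rho c/2$.
\end{itemize}

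\emph{Step 3: item (iv).} Since $\mathcal J\subseteq\Rcal_U$ avoids $U$, we have $V_A(\mathcal J)\subseteq A_0$. By \eqref{eq:defect-J-support},
\[
|N|=a-d-(m\nu(\Rcal_U)-k\omega-D)=ms-kc-1-d-m\nu(\Rcal_U)+k\omega+D.
\]
Expanding $mt+\lambda=m(s-\nu(\Rcal_U)-u)+pu+D-d-1-kh$ and using $m-p=k$ and $u=c-\omega-h$ gives the same expression.

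The relation $\lambda-pu=D-d-1-kh\ge0$ holds because $kh\le D-d-1$, and this also gives $\lambda\ge0$.

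For the upper bound on $\lambda$, use $u\le c$ and $D\le bc+1-b\omega\le bc+1$. Then $\lambda\le pc+bc+1-d-1\le(m+1)c$, since $p+b=m+1$.

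Lastly, $s-t=\nu(\Rcal_U)+u\le bc+1+c\le(b+4)c$. Combined with $c/s\le\eta$ from \cref{cl:window-estimates} and $\eta$ small, this gives $s/2\le t\le 3s/2$.
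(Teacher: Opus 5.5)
Your proposal is correct and follows the paper's proof essentially step by step. The steps match: (i) feeds $D\ge|\mathcal J|\ge\omega/L$ into $D+b\omega\le bc+1$; (iii) splits into two cases according to which term attains the minimum defining $h$; and $|N|=mt+\lambda$ is checked by the same algebra. The only differences are that you bound $\lambda\le pc+bc$ more crudely, via $u\le c$ and $D\le bc+1$, which is still valid. Also, to get $|t-s|\le(b+4)c$ you should state explicitly that $s-t=\nu(\Rcal_U)+u\ge0$, which is immediate.
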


\begin{proof}
        Since each $\sigma(E)$ is a positive integer, we have
        $D\ge|\mathcal J|=\nu(\Rcal_U)$.
        Also, \eqref{eq:defect-R-capacity} gives
        $D+b\omega\le bc+1$.
        Since every nonzero contribution $j(E)$ to $\omega$ is at most
        $L$, we have
        $\omega\le L|\{E\in\mathcal J:j(E)\ge1\}|$, whereas
        $D\ge|\{E\in\mathcal J:j(E)\ge1\}|$.
        Thus, $D\ge\omega/L$, and consequently
        \[
                \omega
                \le\frac{bc+1}{b+1/L}
                =\frac{bL}{bL+1}c+\frac{L}{bL+1}
                \le(1-\xi)c.
        \]
        Here the last inequality uses $c\ge2L$ and
        $\xi=1/(2(bL+1))$.
        This proves (i).

        Since $D\ge\nu(\Rcal_U)>d$ and all these quantities are
        integers, we have $D-d-1\ge0$.
        Hence, $0\le h\le\rho c$.
        By (i), $u=c-\omega-h\ge(\xi-\rho)c$.
        By the choice of $\rho$, we have
        $u\ge\xi c/2$,
        $h\le\rho c\le\xi c/4\le u$, and
        \[
                \frac hu
                \le\frac{\rho}{\xi-\rho}
                \le\rho_{\ref{lem:p-ordered-local-blocker}}.
        \]
        Moreover, the lower bound on $c$ and the choice of $\beta'$ in
        the hierarchy give
        $u\ge\xi c/2\ge
                C_{\ref{lem:p-ordered-local-blocker}}$.
        This proves (ii).

        If the minimum in the definition of $h$ is attained by the second term, then
        \[
                h+1
                =
                \left\lceil\frac{D-d}{k}\right\rceil
                \ge\frac{D-d}{k+1}
                \ge\frac{\nu(\Rcal_U)-d}{k+1}.
        \]
        If it is attained by the first term, then $h+1>\rho c$.
        Since every member of $\Rcal_U$ meets $R$, we have
        $\nu(\Rcal_U)\le|R|=bc+1\le2bc$, and hence
        $h+1\ge\frac{\rho}{4b}(\nu(\Rcal_U)-d)$.
        Both cases give
        $h+1\ge\alpha_1(\nu(\Rcal_U)-d)$ by the choice of $\alpha_1$.
        This proves (iii).

        By the definition of $h$,
        $\lambda-pu=D-d-1-kh\ge0$.
        In particular, $\lambda\ge pu\ge0$.
        Since $\mathcal J\subseteq\Rcal_U$, every member of
        $\mathcal J$ avoids $U$, and hence
        $V_A(\mathcal J)\subseteq A_0$.
        By \eqref{eq:defect-J-support},
        $|V_A(\mathcal J)|=m\nu(\Rcal_U)-k\omega-D$.
        Using $m-p=k$ and $u=c-\omega-h$, we obtain
        \[
                \begin{aligned}
                        |N|
                         & =
                        a-d-|V_A(\mathcal J)|=
                        ms-kc-1-d-m\nu(\Rcal_U)+k\omega+D \\
                         & =m(s-\nu(\Rcal_U)-u)
                        +pu+D-d-1-kh=mt+\lambda.
                \end{aligned}
        \]

        From $D+b\omega\le bc+1$ and
        $c-\omega=u+h$, we have
        \[
                \lambda\le
                pu+b(c-\omega)-d-kh=
                (m+1)u+h-d\le
                (m+1)(u+h)
                \le(m+1)c.
        \]
        Finally, the definition of $t$ gives
        $s-t=\nu(\Rcal_U)+u$.
        Since $\nu(\Rcal_U)\le bc+1\le(b+1)c$ and $u\le c$, we have
        $0\le s-t\le(b+2)c$, and hence
        $|t-s|\le(b+4)c$.
        By \cref{cl:window-estimates}, $c/s\le\eta$.
        Since $\eta$ is sufficiently small, $(b+4)c\le s/2$, so
        $s/2\le t\le s\le3s/2$.
        This proves (iv).
\end{proof}

We now collect some consequences of \cref{cl:completion-bookkeeping}.
By the convention in the remark after \cref{lem:p-ordered-local-blocker}, we may assume that $C_{\ref{lem:p-ordered-local-blocker}}\ge1$.
Hence, \cref{cl:completion-bookkeeping} gives $u\ge1$.
Thus, we have
\begin{equation}\label{eq:completion-feasibility}
        p\le pu\le\lambda\le |N|-m \text{ and }
        |N|\ge ms/2.
\end{equation}
Indeed, the first two inequalities follow from $u\ge1$ and \cref{cl:completion-bookkeeping}(iv).
Also $t\ge s/2\ge1$, since $s_0'$ is sufficiently large.
Since $|N|=mt+\lambda$, we have $\lambda\le |N|-m$ and $|N|\ge mt\ge ms/2$.

We now consider the $\lambda$-subsets $T\subseteq N$.
A set $T$ is called \emph{bad} if the family $H[N\setminus T]$ has no perfect matching, and \emph{good} otherwise.

\medskip\noindent\textbf{Case 1.} At least half of these $\lambda$-subsets of $N$ are bad.
\medskip

Fix $T\in\binom N\lambda$ and $x\in N\setminus T$.
Since $N\subseteq A_0$, the definition of $U$ gives $d_H(x)>\theta s^{m-1}$.
The vertices deleted from $A$ before passing to $N\setminus T$
lie in $U\cup V_A(\mathcal J)\cup T$.
By \cref{cl:defect-coarse-bounds,cl:completion-bookkeeping}, we conclude that $|U|=d=O_{m,k}(c)$, $|V_A(\mathcal J)|\le m\nu(\Rcal_U)\le m(b+1)c$ and $|T|=\lambda\le(m+1)c$.
Hence the total number of deleted vertices is $O_{m,k}(c)$.
For a fixed deleted vertex $y\ne x$, the number of $m$-sets containing
both $x$ and $y$ is $O_m(s^{m-2})$.  Thus deleting all these vertices
removes from the degree of $x$ at most $O_{m,k}(cs^{m-2})
        \le{\theta}s^{m-1}/2$,
where the last inequality follows from $c/s\le\eta$ in
\cref{cl:window-estimates} and the choice of $\eta$ in the hierarchy.
Therefore, by \cref{cl:completion-bookkeeping}(iv), we have $|N| = mt+\lambda$ and $s/2\le t\le 3s/2$.
Hence, for every $T\in\binom{N}{\lambda}$, $|N\setminus T|
        = |N|-\lambda
        = mt$.
Therefore,
\[
        \delta\bigl(H[N\setminus T]\bigr)
        \ge \frac{\theta}{2}s^{m-1}
        \ge \theta'|N\setminus T|^{m-1},
\]
where the last inequality follows from $|N\setminus T|=mt\le 2ms$ and the definition $\theta'=\theta/(2(2m)^{m-1})$.
In particular, $m\mid |N\setminus T|$. Moreover, since
$t\ge s/2$ and $s_0'$ was chosen sufficiently large, we have $|N\setminus T|=mt\ge v_{\ref{lem:dense-defect}}$.
Since $T$ is bad, $H[N\setminus T]$ has no perfect matching.
Therefore, \cref{lem:dense-defect} gives
\[
        \left|\binom{N\setminus T}m\setminus H\right|
        \ge \zeta|N\setminus T|^m
        \ge 4\zeta's^m
\]
for each bad $T$.
Double-counting pairs $(T,E)$ with $T$ bad and $E\in\binom{N\setminus T}m\setminus H$ yields
\begin{equation}\label{eq:defect-bad-missing}
        |\M(A)\setminus Z_U|\ge \zeta's^m.
\end{equation}
Indeed, each bad $T$ contributes at least $4\zeta's^m$ missing $m$-sets, and at least half of the $\lambda$-subsets of $N$ are bad.
Hence, the number of such pairs is at least $\frac12\binom{|N|}{\lambda}\cdot 4\zeta's^m$.
On the other hand, \eqref{eq:completion-feasibility} gives $\lambda\le |N|-m$.
A fixed $m$-set $E\subseteq N$ can occur only when $T\cap E=\emptyset$, and hence for at most $\binom{|N|-m}{\lambda}$ choices of $T$.
Since $\binom{|N|}{\lambda}/\binom{|N|-m}{\lambda}=\prod_{r=0}^{m-1}(|N|-r)/(|N|-\lambda-r)\ge1$, dividing the lower count for pairs by this upper multiplicity gives at least $2\zeta's^m$ missing $m$-sets, and we keep the weaker bound in \eqref{eq:defect-bad-missing}.
Since $N\subseteq A_0\subseteq A$, every counted $E$ is an $A$-missing $m$-set, and since $E\cap U=\emptyset$ it is not counted by $Z_U$.
Since every member of $\Rcal_U$ meets $R$, every matching in $\Rcal_U$ uses distinct vertices of $R$.
Hence, $\nu(\Rcal_U)\le |R|=bc+1$.
As $d\ge0$ and $c\ge1$, we have
\[
        M_3(\nu(\Rcal_U)-d)B_k(s,c)
        \le M_3(b+1)cB_k(s,c)
        \le M_3(b+1)\eta s^m,
\]
where the last inequality follows from \cref{cl:window-estimates}.
Since $\eta$ is chosen after $M_3$ and $\zeta'$ is fixed, the
hierarchy gives
\[
        M_3(b+1)\eta\le\zeta'.
\]
Thus, \cref{cl:defect-completion} follows in this case.

\medskip\noindent\textbf{Case 2.} More than half of these $\lambda$-subsets of $N$ are good.
\medskip

For each good $T\in\binom N\lambda$, fix once and for all a perfect matching in $H[N\setminus T]$.
Since $pu\le\lambda$ by \eqref{eq:completion-feasibility}, choose a set $P_T\subseteq T$ with $|P_T|=pu$.
Define $B_R=R\setminus V_R(\mathcal J)$ and $C=(U\cup T\cup B_R)\setminus P_T$.
Then $P_T\cap C=\emptyset$, $P_T\cup C=U\cup T\cup B_R$, and
\[
        \begin{aligned}
                |C| & =
                |U|+|T|+|B_R|-|P_T| =
                d+\lambda+bc+1-D-b\omega-pu \\
                    & =
                (m+1)(c-\omega)-mh-pu =
                b(c-\omega-h)+h
                =
                bu+h.
        \end{aligned}
\]
Here the first equality uses $P_T\subseteq T$ and the disjointness of
$U$, $T$ and $B_R$.
The second uses $|U|=d$, $|T|=\lambda$,
$|P_T|=pu$ and
\[
        |B_R|=|R|-|V_R(\mathcal J)|=bc+1-D-b\omega.
\]
The remaining equalities follow from the definition of $\lambda$, $p+b=m+1$, and $u=c-\omega-h$.

For a good $T$, completing $\mathcal J$ and the fixed perfect matching in $H[N\setminus T]$ to an $s$-matching reduces to finding a suitable mixed ordered partition on the remaining vertices.
For each $e\in\binom{P_T}p$, define
\[
        \G_e^{(b)}=\{Y\in\binom Cb:e\cup Y\in\F_{m+1}\} \text{ and }
        \G_e^{(b+1)}=\{Y\in\binom C{b+1}:e\cup Y\in\F_{m+2}\}.
\]
In this setting, a \emph{mixed ordered partition} consists of ordered partitions
$P_T=e_1\sqcup\cdots\sqcup e_u$ and
$C=Y_1\sqcup\cdots\sqcup Y_u$, where each $e_r$ has size $p$,
exactly $h$ of the sets $Y_r$ have size $b+1$, the remaining
$u-h$ have size $b$, and $Y_r\in\G_{e_r}^{(|Y_r|)}$ for every
$r\in[u]$.
Such a mixed ordered partition would combine with $\mathcal J$ and
the fixed perfect $t$-matching of $H[N\setminus T]$ as follows.
The vertex set $[n]$ satisfies
\[
        [n]
        =V_A(\mathcal J)\sqcup V_R(\mathcal J)
        \sqcup (N\setminus T)\sqcup P_T\sqcup C.
\]
Indeed, $A=V_A(\mathcal J)\sqcup N\sqcup U$ and $R=V_R(\mathcal J)\sqcup B_R$, while $N=(N\setminus T)\sqcup T$ and $T\cup U\cup B_R=P_T\sqcup C$ by the definitions of $P_T$ and $C$ above.
Thus $\mathcal J$ covers $V_A(\mathcal J)\cup V_R(\mathcal J)$, the perfect $t$-matching in $H[N\setminus T]$ covers $N\setminus T$, and the mixed ordered partition covers $P_T\cup C$.
Every member used belongs to $\F$: the members of $\mathcal J$ lie in
$\Rcal_U\subseteq\X(A)\subseteq\F$, the perfect matching on
$N\setminus T$ lies in $H\subseteq\F_m$, and the sets
$e_r\cup Y_r$ coming from the mixed ordered partition lie in
$\F_{m+1}$ or $\F_{m+2}$ by the definitions of
$\G_e^{(b)}$ and $\G_e^{(b+1)}$.
The number of resulting members is
\[
        |\mathcal J|+t+u
        =\nu(\Rcal_U)+s-c-\nu(\Rcal_U)+\omega+h+c-\omega-h
        =s.
\]
This would give an $s$-matching in $\F$, a contradiction.
All hypotheses of \cref{lem:p-ordered-local-blocker} have been verified in \cref{cl:completion-bookkeeping} and the paragraph above, so \cref{lem:p-ordered-local-blocker} gives
\begin{equation}\label{eq:defect-ordered-blocker-output}
        \sum_{e\in\binom{P_T}p}
        \left(
        \left|\binom Cb\setminus\G_e^{(b)}\right|
        +
        \left|\binom C{b+1}\setminus\G_e^{(b+1)}\right|
        \right)
        \ge \gamma_{\ref{lem:p-ordered-local-blocker}}(h+1)u^m.
\end{equation}
Since $u\ge\xi c/2$, the right-hand side of
\eqref{eq:defect-ordered-blocker-output} is at least
$\gamma_{\ref{lem:p-ordered-local-blocker}}(\xi/2)^m(h+1)c^m$.

Every pair $(e,Y)$ counted in \eqref{eq:defect-ordered-blocker-output} gives a set $W=e\cup Y$ absent from $\F$, by the definitions of $\G_e^{(b)}$ and $\G_e^{(b+1)}$.
Since $e\subseteq P_T\subseteq T\subseteq N\subseteq A_0\subseteq A$, the set $W$ contains at least $p$ vertices of $A$.

If $|Y|=b$, then $|W|=p+b=m+1$ and $|W\cap A|\ge p=m-k=\tau_1$, so $W\in\mathcal H^k_{m+1}(m,s,\ell;A)$.
If $|Y|=b+1$, then $|W|=m+2$.
When $L\ge 2$, the $(m+2)$-layer has threshold $\tau_2=m-2k\le m-k=p\le |W\cap A|$, and hence $W\in\mathcal H^k_{m+2}(m,s,\ell;A)$.
When $L=1$, we have $m+2=m+L+1$, so the entire $(m+2)$-layer is
canonical. Thus, in either case, $W\in
        \mathcal H^k(m,s,\ell;A)\setminus\mathcal F
        =\mathcal M(A)$.
Since $|W|=m+1$ or $m+2$, it is not counted by $Z_U$.

We now double-count triples $(T,e,Y)$, where $T\in\binom N\lambda$ is good and $(e,Y)$ is counted in \eqref{eq:defect-ordered-blocker-output}.
Since at least half of the $\lambda$-subsets of $N$ are good, the number of such triples is at least
\[
        \frac12\binom{|N|}{\lambda}
        \gamma_{\ref{lem:p-ordered-local-blocker}}
        \left(\frac\xi2\right)^m
        (h+1)c^m.
\]
Conversely, fix $W\in\M(A)\setminus Z_U$.
If $W=e\cup Y$ arises from such a triple, then $e\subseteq W\cap T$.
There are at most $\binom{m+2}p$ choices for $e\subseteq W$.
After $e$ is fixed, the number of $\lambda$-subsets $T\subseteq N$ containing $e$ is at most $\binom{|N|-p}{\lambda-p}$, and $Y$ is then determined by $W$ and $e$.
Hence each fixed $W\in\M(A)\setminus Z_U$ is counted at most $\binom{m+2}p\binom{|N|-p}{\lambda-p}$ times.
Therefore,
\begin{equation}\label{eq:defect-good-double-count-raw}
        |\M(A)\setminus Z_U|
        \ge
        \frac{\gamma_{\ref{lem:p-ordered-local-blocker}}}
        {2\binom{m+2}{p}}
        \left(\frac\xi2\right)^m
        (h+1)c^m
        \frac{\binom{|N|}{\lambda}}
        {\binom{|N|-p}{\lambda-p}}.
\end{equation}
By \eqref{eq:completion-feasibility}, $\lambda\ge p$ and
$|N|\ge ms/2$, while \cref{cl:completion-bookkeeping} gives
$\lambda\le(m+1)c$.  Since $s_0'$ is sufficiently large, for every
$0\le i\le p-1$ we have $|N|-i\ge ms/4$.  Consequently,
\begin{equation}\label{eq:defect-binomial-ratio-lower}
        \frac{\binom{|N|}{\lambda}}
        {\binom{|N|-p}{\lambda-p}}
        =
        \prod_{i=0}^{p-1}\frac{|N|-i}{\lambda-i}
        \ge
        \gamma_1\left(\frac sc\right)^p
\end{equation}
for some constant $\gamma_1=\gamma_1(m,k)>0$.
Combining \eqref{eq:defect-good-double-count-raw} and
\eqref{eq:defect-binomial-ratio-lower}, and then applying
\cref{cl:completion-bookkeeping,cl:window-estimates}, we obtain, for some
constant $\gamma=\gamma(m,k)>0$,
\[
        |\M(A)\setminus Z_U|\ge \gamma(h+1)s^{m-k}c^k\ge \gamma\alpha_1M_3\eta^{-1}
        (\nu(\Rcal_U)-d)B_k(s,c)  \ge M_3(\nu(\Rcal_U)-d)B_k(s,c).
\]
The last inequality follows because $\gamma$ and $\alpha_1$ depend only
on quantities fixed before $\eta$ is chosen, so the hierarchy ensures
$\gamma\alpha_1\eta^{-1}\ge1$.
This proves \cref{cl:defect-completion}.
\subsection{Proof of Claim~\ref{cl:defect-clean-center}: Cleaning the center}

We prove \cref{cl:defect-clean-center} by a center switch.
For $S\subseteq R$, define
\[
        T(S)=\{P\cup\{z\}:P\in\binom{A_0}{m-1},\ z\in S\}.
\]
For $U'\subseteq U$ and $S\subseteq R$ with
$|U'|=|S|=e\ge1$, we say that the pair $(U',S)$ is
\emph{switch-admissible} if $d\le M_2e$ and
\begin{equation}\label{eq:defect-switch-admissible}
        |\X(A)\setminus T(S)|
        \le
        |\M(A)\setminus Z_U|+M_2dE_k(s,c).
\end{equation}
The switch replaces low-degree vertices of $A$ by vertices of $R$.
Removing vertices from $U$ loses few members of $H=\F_m\cap\binom Am$;
choosing vertices of $R$ that capture enough extra sets of the form
$P\cup\{z\}$ increases $g$, contradicting the maximality of $A$.
We first find a switch-admissible pair and then perform the switch.
Thus, suppose for a contradiction that $U\ne\emptyset$ from now on.
Since $d=|U|$, we have $d>0$.

\begin{claim}\label{cl:defect-switch-admissible-exists}
        There exist $U'\subseteq U$ and
        $S\subseteq R$ such that $(U',S)$ is switch-admissible.
\end{claim}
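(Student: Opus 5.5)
We will build the pair from the decomposition of $\X(A)$ supplied by \cref{cl:defect-capture}, with a case split on $\nu(\Rcal_U)$ versus $d$. The target is a set $S\subseteq R$ with $|S|=e$, where $d\le M_2e$, such that all $A$-extra sets outside $T(S)$ are controlled by $|\M(A)\setminus Z_U|+M_2dE_k(s,c)$. Once $S$ is fixed, we let $U'$ be any $e$-subset of $U$; this requires $e\le d$.

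\textbf{Step 1: reduce to the star part.} First, \cref{cl:defect-capture} gives
\[
|\X(A)\setminus X_U^\star|\le M_1(\nu(\Rcal_U)+d)E_k(s,c).
\]
If $\nu(\Rcal_U)\le d$, this is at most $2M_1dE_k(s,c)\le M_2dE_k(s,c)$. If $\nu(\Rcal_U)>d$, we use \cref{cl:defect-completion}:
\[
|\M(A)\setminus Z_U|\ge M_3(\nu(\Rcal_U)-d)B_k(s,c)\ge M_1(\nu(\Rcal_U)-d)E_k(s,c).
\]
Hence $M_1\nu(\Rcal_U)E_k\le|\M(A)\setminus Z_U|+M_1dE_k$. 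In both cases,
\[
|\X(A)\setminus X_U^\star|\le|\M(A)\setminus Z_U|+2M_1dE_k(s,c).
\]
It therefore remains to treat $X_U^\star$, that is, the sets $P\cup\{z\}\in\F_m$ with $P\in\binom{A_0}{m-1}$ and $z\in R$. These sets all lie in $T(R)$.

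\textbf{Step 2: choose $S$ by degree.} For $z\in R$, let $x_z=|\{P\in\binom{A_0}{m-1}:P\cup\{z\}\in\F_m\}|$. Order $R$ so that $x_z$ is decreasing. Take $S$ to be the top $e$ vertices, where $e=\min\{d,|R|\}$. Since $d=O(c)$ by \cref{cl:defect-coarse-bounds} and $|R|=bc+1$, the condition $d\le M_2e$ holds once $M_2$ exceeds a constant depending on $C_0/b$ (for $d\le|R|$ we simply have $e=d$). The sets of $X_U^\star\setminus T(S)$ use only vertices $z\notin S$. We must show that their number $\sum_{z\notin S}x_z$ is at most $|\M(A)\setminus Z_U|+O(dE_k)$.

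The key observation is that two vertices $z\ne z'\in R$, each with $x_z,x_{z'}\ge\theta' s^{m-1}$ (say), can be combined with $\mathcal J$. More generally, we extend a maximum matching of $\Rcal_U$ by taking vertex-disjoint members $P_i\cup\{z_i\}$ for many distinct $z_i$. This greedy step is possible because each high $x_z$ leaves room after deleting $O(c)$ vertices, as in Case~1 of \cref{cl:defect-completion}. It shows that $\nu(\Rcal_U)$ is at least the number of $z$ with $x_z$ large.

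Next, rerun the accounting of Step 1. Whenever $\nu(\Rcal_U)>d$, the excess $\nu(\Rcal_U)-d$ is paid by $M_3B_k(s,c)$ missing sets each. Since $B_k\ge s^{m-1}\ge x_z/\binom{a}{m-1}\cdot s^{m-1}$ and $x_z\le\binom a{m-1}=O(s^{m-1})$, each $z\notin S$ with large $x_z$ is paid for. This uses that only $d$ of them lie in $S$, because the top $d$ are in $S$. The vertices $z$ with small $x_z$ (below $\eta s^{m-1}$, say) contribute in total at most $|R|\eta s^{m-1}$. We bound this against $M_2dE_k$ or against the completion bound, using a version of \cref{cl:defect-completion} with threshold $\theta'$.

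\textbf{Main obstacle.} The hard part is this last accounting for low-$x_z$ vertices of $R$ when $d$ is tiny, for instance $d=1$. Then $c\eta s^{m-1}$ cannot be absorbed by $dE_k$. I would first try to absorb it through deficits: each $z$ with $x_z<\binom{a}{m-1}/2$ but $x_z>0$ should witness many missing sets in $\mathcal H^k_{m+1}$ containing $z$. This follows from upward closure combined with the no-$s$-matching structure, via \cref{lem:p-ordered-local-blocker} applied with $h=0$. If that fails, I would refine the choice of $e$ to include all $z$ with $x_z$ nonnegligible. I would then bound that count by $M_2d$ via the switch maximality argument already anticipated in the definition.
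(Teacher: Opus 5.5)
Your Step~1 and the case $d>|R|$ are fine: there $S=R$ contains all of $X_U^\star$, and $d\le C_0c\le M_2|R|$. Your choice of $S$ as the $e$ vertices of largest $x_z$ is in fact optimal, since $|X_U^\star\setminus T(S)|=\sum_{z\in R\setminus S}x_z$.

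The gap is exactly the one you flag: for $d\le|R|$ you never bound $\sum_{z\in R\setminus S}x_z$. Thresholding on $x_z$ cannot close this by itself. The vertices below the threshold are controlled only by $|R|$ times the threshold, which carries a factor $c$. The available budget is $M_2dE_k(s,c)$ plus $|\M(A)\setminus Z_U|$. The latter has a lower bound, of order $(\nu(\Rcal_U)-d)B_k(s,c)$ via \cref{cl:defect-completion}, only when $\nu(\Rcal_U)>d$.

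Neither proposed remedy is viable as stated. The blocker lemma enters only through \cref{cl:defect-completion}, and it yields a total deficit proportional to $\nu(\Rcal_U)-d$. Nothing in it produces a separate batch of missing $(m+1)$-sets for each vertex $z$ with small positive $x_z$. Enlarging $e$ is impossible, because $U'\subseteq U$ forces $e\le d$. The maximality of $g(A)$ only gives $\sum_{z\in S}x_z\le|S|\theta s^{m-1}$ for $|S|\le d$, which says nothing about vertices outside $S$. Moreover, this claim is exactly what must be established before maximality can be turned into a contradiction.

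The missing idea is to control the star part by a maximum matching instead of by degrees. Take a maximum matching $\mathcal J_0=\{P_r\cup\{z_r\}:r\le\mu\}$ in $X_U^\star$, so that $\mu\le\nu(\Rcal_U)$, and put $S_\mu=\{z_1,\ldots,z_\mu\}$ and $Q=\bigcup_rP_r$.

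Every member $P\cup\{z\}$ of $X_U^\star$ with $z\notin S_\mu$ must meet $Q$, since otherwise it would extend $\mathcal J_0$. Hence all such members together number $O(|R|\,|Q|\,s^{m-2})=O(\mu cs^{m-2})\le O(\mu E_k(s,c))$, however the individual $x_z$ are distributed.

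Let $S$ be a $d$-set covering as much of $S_\mu$ as possible. Then the members with $z\in S_\mu\setminus S$ number at most $(\mu-d)_+\binom{|A_0|}{m-1}=O((\mu-d)_+B_k(s,c))$.

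Using $\mu\le\nu(\Rcal_U)\le(\nu(\Rcal_U)-d)_++d$ together with \cref{cl:defect-capture}, this gives in one step $|\X(A)\setminus T(S)|\le C'(\nu(\Rcal_U)-d)_+B_k(s,c)+C'dE_k(s,c)$ for a constant $C'=C'(m,k,M_1)$. The first term vanishes when $\nu(\Rcal_U)\le d$. Otherwise it is absorbed by \cref{cl:defect-completion}, because $M_3$ dominates $C'$.

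Collecting everything before invoking \cref{cl:defect-completion} also avoids spending $|\M(A)\setminus Z_U|$ twice, which your Step~1 followed by Step~2 would do. Since your top-$e$ set does at least as well as this $S$, the same estimate completes your own construction.
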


\begin{proof}
        Let $\mu=\nu(X_U^\star)$, and choose a maximum matching $\mathcal J_0
                =
                \{P_r\cup\{z_r\}:1\le r\le\mu\}
                \subseteq X_U^\star$.
        Let $S_\mu=\{z_1,\ldots,z_\mu\}$.
        Since $X_U^\star\subseteq\Rcal_U$, we have
        $\mu\le\nu(\Rcal_U)$.
        We split the proof into two cases based on the value of $d$.

        \medskip
        \noindent\textbf{Case 1.} Suppose that $d>|R|$.
        \medskip

        In this case, choose $U'\subseteq U$ with $|U'|=|R|$, and take $S=R$.
        Thus, $X_U^\star\subseteq T(S)$, because every member of
        $X_U^\star$ has the form $P\cup\{z\}$ with
        $P\in\binom{A_0}{m-1}$ and $z\in R=S$.

        Every member of $\Rcal_U$ is $A$-extra, and hence meets $R$.
        Thus, any matching in $\Rcal_U$ uses distinct vertices of $R$,
        so $\nu(\Rcal_U)\le |R|<d$.
        By \cref{cl:defect-capture},
        \[
                |\X(A)\setminus T(S)|
                \le
                |\X(A)\setminus X_U^\star|
                \le
                M_1(\nu(\Rcal_U)+d)E_k(s,c)
                \le
                2M_1dE_k(s,c)\le M_2dE_k(s,c).
        \]
        The last inequality follows from the hierarchy.  Hence,
        \eqref{eq:defect-switch-admissible} holds.

        It remains to verify the size condition.
        Here $|U'|=|S|=|R|\ge1$.
        By \cref{cl:defect-coarse-bounds}, $d\le C_0c$.
        Since $|R|=bc+1\ge c$ and $M_2$ is chosen sufficiently large
        in the hierarchy, $d\le C_0c\le M_2|R|$.
        Thus, $(U',S)$ is switch-admissible in this case.

        \medskip
        \noindent\textbf{Case 2.} Suppose that $d\le |R|$.
        \medskip

        In this case, let $U'=U$.
        Choose $S\subseteq R$ with $|S|=d$ so that $|S\cap S_\mu|$ is
        maximal.
        Given a number $x$, write $x_+$ for $\max\{x,0\}$.
        Thus, $|S_\mu\setminus S|=(\mu-d)_+$ since $|S|=d$ and $|S_\mu|=\mu$ by definition.
        We first bound the size of $X_U^\star\setminus T(S)$.
        \begin{claim}\label{cl:clean-center-XU-loss}
                There is a constant $C_7=C_7(m,k)>0$ such that
                \[
                        |X_U^\star\setminus T(S)|
                        \le
                        (\mu-d)_+\binom{|A_0|}{m-1}
                        +C_7\mu cs^{m-2}.
                \]
        \end{claim}

        \begin{proof}
                Every member of $X_U^\star$ has the form $P\cup\{z\}$,
                where $P\in\binom{A_0}{m-1}$ and $z\in R$.
                If $z\in S$, then this set belongs to $T(S)$.
                Hence only vertices $z\in R\setminus S$ can contribute to
                $X_U^\star\setminus T(S)$.
                Since $S$ maximizes $|S\cap S_\mu|$ among all
                $d$-subsets of $R$, the number of vertices of $S_\mu$
                left outside $S$ is exactly $(\mu-d)_+$.
                The members with $z\in S_\mu\setminus S$ therefore
                contribute at most $(\mu-d)_+\binom{|A_0|}{m-1}$.

                It remains to count members
                $P\cup\{z\}\in X_U^\star\setminus T(S)$ with
                $z\notin S_\mu$.
                Let $Q=\bigcup_{r=1}^{\mu}P_r$.
                If such a member had $P\cap Q=\emptyset$, then
                $P\cup\{z\}$ would be disjoint from every member
                $P_r\cup\{z_r\}$ of $\mathcal J_0$: the sets $P$ and
                $P_r$ are disjoint by $P\cap Q=\emptyset$, the vertex
                $z$ is distinct from every $z_r$ because $z\notin S_\mu$,
                and $P\subseteq A_0$ while each $z_r\in R$.
                This would enlarge the matching $\mathcal J_0$,
                contradicting its maximality.
                Thus every remaining member has $P\cap Q\ne\emptyset$.

                We count these remaining members by first choosing
                $z\in R$, then a vertex of $P\cap Q$, and then the other
                $m-2$ vertices of $P$ from $A_0$.
                Since $|R|\le(b+1)c$, $|Q|\le(m-1)\mu$, and
                $|A_0|<(m+1)s$, their number is
                $O_{m,k}(\mu cs^{m-2})$.  Choosing $C_7=C_7(m,k)$
                sufficiently large proves the claim.
        \end{proof}

        We now combine this estimate with \cref{cl:defect-capture}.
        Since $\binom{|A_0|}{m-1}=O_{m,k}(s^{m-1})$,
        $\mu\le\nu(\Rcal_U)$, $cs^{m-2}\le E_k(s,c)$, and
        $B_k(s,c)=s^{m-1}+E_k(s,c)$, there is a constant
        $C_8=C_8(m,k,M_1)$ such that
        \[
                \begin{aligned}
                        |\X(A)\setminus T(S)|
                         & \le
                        |\X(A)\setminus X_U^\star|
                        +|X_U^\star\setminus T(S)| \\
                         & \le
                        C_8(\nu(\Rcal_U)-d)_+B_k(s,c)
                        +C_8dE_k(s,c)              \\
                         & \le
                        C_8(\nu(\Rcal_U)-d)_+B_k(s,c)
                        +M_2dE_k(s,c),
                \end{aligned}
        \]
        where the last inequality follows from the hierarchy.

        If $\nu(\Rcal_U)\le d$, then the first term vanishes, and
        \eqref{eq:defect-switch-admissible} follows immediately.
        If $\nu(\Rcal_U)>d$, then \cref{cl:defect-completion} gives
        $|\M(A)\setminus Z_U|\ge
                M_3(\nu(\Rcal_U)-d)B_k(s,c)$.
        Since $M_3$ is chosen after $M_1$ and $M_2$, the hierarchy
        ensures that $M_3>C_8$, and this again implies
        \eqref{eq:defect-switch-admissible}.

        Finally, $|U'|=|S|=d\ge1$ and
        $d\le M_2d=M_2|U'|$ are immediate in this case.
        Thus, $(U',S)$ is switch-admissible in Case 2.
        The two cases prove the claim.
\end{proof}

By \cref{cl:defect-switch-admissible-exists}, choose
$U'\subseteq U$ and $S\subseteq R$ such that $(U',S)$ is switch-admissible.
Write $e=|U'|=|S|$ and let $A^*=(A\setminus U')\cup S$.
Clearly, $A^*$ is again a set of size $a$.
Define $L(U')=\{P\cup\{u\}:P\in\binom{A_0}{m-1},\ u\in U'\}$.
We now show that this switch increases the value of $g$.

\begin{claim}\label{cl:defect-center-switch}
        We have $g(A^*)>g(A)$.
\end{claim}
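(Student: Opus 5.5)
We compare $g(A^*)=|\F_m\cap\binom{A^*}m|$ with $g(A)=|H|$ by tracking which $m$-sets are lost and which are gained. Since $A^*=(A\setminus U')\cup S$, the $m$-sets of $A$ lost are exactly those meeting $U'$. The $m$-sets gained include all members of $\F_m\cap T(S)$ whose $(m-1)$-part lies in $A_0\subseteq A\setminus U'$. Because $T(S)$ and $L(U')$ are disjoint and both consist of $m$-sets with $m-1$ vertices in $A_0$, this gives
\[
g(A^*)-g(A)\ \ge\ |\F_m\cap T(S)|-|H\cap\{E\in\tbinom Am:E\cap U'\ne\emptyset\}|.
\]
We bound the first term from below and the second from above.

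For the loss: each $u\in U'\subseteq U$ has $d_H(u)\le\theta s^{m-1}$. Hence the loss is at most $e\theta s^{m-1}$.

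For the gain: every member of $T(S)$ is $A$-extra, since it has one vertex in $R$ and so defect $1$. Moreover $|T(S)|=e\binom{|A_0|}{m-1}$, so $|\F_m\cap T(S)|=|T(S)|-|T(S)\setminus\F|$. The sets of $T(S)$ missing from $\F$ are not in $\M(A)$, so we must argue differently. We use the upward closure of $\F$ together with the $m$-sets $L(U')$. For each $P\in\binom{A_0}{m-1}$ and each pairing $u\leftrightarrow z$ between $U'$ and $S$ (fix a bijection), if $P\cup\{u\}\notin\F$ then $P\cup\{u\}\in Z_U$. So the number of missing sets in $L(U')$ is at least $e\binom{|A_0|}{m-1}-e\theta s^{m-1}$.

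The key point is then to show that few members of $T(S)$ are absent from $\F$. I would argue this by contrasting two counts.
\begin{itemize}
\item $|\M(A)|\le|\X(A)|$ by \eqref{eq:defect-nonstrict}.
\item $|\M(A)|\ge|Z_U|+|\M(A)\setminus Z_U|$, where $|Z_U|\ge|L(U')\setminus\F|\ge e(\binom{|A_0|}{m-1}-\theta s^{m-1})$.
\end{itemize}
Writing $|\X(A)|\le|\F_m\cap T(S)|+|\X(A)\setminus T(S)|$ and applying switch-admissibility \eqref{eq:defect-switch-admissible} to cancel $|\M(A)\setminus Z_U|$, we get
\[
|\F_m\cap T(S)|\ \ge\ e\binom{|A_0|}{m-1}-e\theta s^{m-1}-M_2dE_k(s,c).
\]

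Combining with the loss bound gives
\[
g(A^*)-g(A)\ \ge\ e\binom{|A_0|}{m-1}-2e\theta s^{m-1}-M_2dE_k(s,c).
\]
We now compare the terms.
\begin{itemize}
\item Since $|A_0|\ge a-d\ge ms/2$ by \cref{cl:defect-coarse-bounds}, \eqref{eq:defect-kappa-binomial} gives $\binom{|A_0|}{m-1}\ge\kappa s^{m-1}$.
\item Since $\theta\ll\kappa$, the term $2e\theta s^{m-1}$ is at most $\tfrac12 e\kappa s^{m-1}$.
\item Using $d\le M_2e$ and $E_k(s,c)\le\eta s^{m-1}$ from \cref{cl:window-estimates}, the last term is at most $M_2^2\eta e s^{m-1}\le\tfrac14\kappa e s^{m-1}$, because $\eta\ll 1/M_2$.
\end{itemize}
Hence $g(A^*)-g(A)\ge\tfrac14\kappa e s^{m-1}>0$, which contradicts the maximality of $g(A)$ (this contradiction is then used to establish \cref{cl:defect-clean-center}).

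The main obstacle is the lower bound on $|\F_m\cap T(S)|$, which requires relating missing sets of $T(S)$ to counted quantities. The bookkeeping must make $Z_U$ account for at least $|L(U')\setminus\F|$, and must avoid double-counting $X_U^\star$ against $T(S)$.
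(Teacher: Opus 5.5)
Your proposal is correct and follows the paper's proof essentially step for step. It uses the same gain/loss inequality via $T(S)$ and the members of $H$ meeting $U'$, and the same lower bound $|\F_m\cap T(S)|=|\X(A)|-|\X(A)\setminus T(S)|\ge |Z_U|-M_2dE_k(s,c)$ from \eqref{eq:defect-nonstrict} and switch-admissibility. It then makes the same final comparison using $\kappa$, $\theta$, $d\le M_2e$ and $E_k(s,c)\le\eta s^{m-1}$. Two small wording points: members of $T(S)$ always have defect $1$ but are $A$-extra only when they lie in $\F$, and your remarks about upward closure and a bijection $U'\leftrightarrow S$ are never actually used.
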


\begin{proof}
        Recall that $A_0=A\setminus U$ and $d=|U|$.
        Since $U\subseteq A$, we have $|A_0|=a-d$.
        By \cref{cl:defect-coarse-bounds}, $d\le C_0c$, and by
        \cref{cl:window-estimates}, $c/s\le\eta$.
        Since $|A_0|=a-d$ and $a=ms-kc-1$, we have
        \[
                a-d
                \ge ms-(k+C_0)c-1
                \ge\bigl(m-(k+C_0)\eta\bigr)s-1
                \ge\frac{ms}{2},
        \]
        by the hierarchy and the choice of $s_0'$.
        Therefore, \eqref{eq:defect-kappa-binomial} gives
        $\binom{a-d}{m-1}\ge \kappa s^{m-1}$.

        Every member of $T(S)$ has the form $P\cup\{z\}$, where
        $P\subseteq A_0\subseteq A\setminus U'$ and $z\in S\subseteq R$.
        Hence, it is an $m$-set contained in $A^*$, but it is not
        contained in $A$. Thus, every member of $T(S)$ lies in
        $\binom{A^*}m\setminus\binom Am$.
        Since every member of $T(S)$ has size $m$, every member of
        $T(S)\cap\X(A)$ belongs to $\F_m$ and is counted by $g(A^*)$.
        Conversely, every member counted by $g(A)$ but not by $g(A^*)$
        is a member of $H=\F_m\cap\binom Am$ that meets $U'$.
        Therefore,
        \begin{equation}\label{eq:g-switch-basic-gain-loss}
                g(A^*)-g(A)
                \ge
                |T(S)\cap\X(A)|
                -|\{E\in H:E\cap U'\ne\emptyset\}|.
        \end{equation}

        Since $U'\subseteq U$, the definition of $U$ gives
        $d_H(u)\le \theta s^{m-1}$ for every $u\in U'$. Hence,
        \begin{equation}\label{eq:g-switch-lost-H-bound}
                |\{E\in H:E\cap U'\ne\emptyset\}|
                \le
                \sum_{u\in U'}d_H(u)
                \le e\theta s^{m-1}.
        \end{equation}

        We next lower bound $|T(S)\cap\X(A)|$.
        For each $u\in U'$, the family $L(U')$ contains exactly
        $\binom{a-d}{m-1}$ sets containing $u$, and at most
        $\theta s^{m-1}$ of them lie in $H$.
        Since each set in $L(U')$ contains a unique vertex of $U'$,
        \[
                |L(U')\setminus\F_m|
                \ge
                e\binom{a-d}{m-1}-e\theta s^{m-1}.
        \]
        Every set in $L(U')\setminus\F_m$ belongs to $Z_U$ by
        \eqref{eq:defect-ZU-definition}, because it is an $m$-subset of
        $A$, is absent from $\F_m$, and meets $U$. Hence,
        \begin{equation}\label{eq:g-switch-ZU-lower}
                |Z_U|
                \ge
                e\binom{a-d}{m-1}-e\theta s^{m-1}.
        \end{equation}

        Since $(U',S)$ is switch-admissible, \eqref{eq:defect-switch-admissible}
        holds. Using this, \eqref{eq:defect-nonstrict}, and
        $Z_U\subseteq\M(A)$, we obtain
        \begin{equation}\label{eq:g-switch-XT-lower}
                \begin{aligned}
                        |\X(A)\cap T(S)|
                         & =
                        |\X(A)|-|\X(A)\setminus T(S)|\ge
                        |\M(A)|-|\M(A)\setminus Z_U|-M_2dE_k(s,c) \\
                         & =
                        |Z_U|-M_2dE_k(s,c) \ge
                        e\binom{a-d}{m-1}-e\theta s^{m-1}
                        -M_2dE_k(s,c),
                \end{aligned}
        \end{equation}
        where the last inequality follows from \eqref{eq:g-switch-ZU-lower}.

        Finally, \eqref{eq:g-switch-basic-gain-loss},
        \eqref{eq:g-switch-lost-H-bound}, and
        \eqref{eq:g-switch-XT-lower} give
        \[
                g(A^*)-g(A)\ge e\binom{a-d}{m-1}
                -2e\theta s^{m-1}-M_2dE_k(s,c)\ge e(\kappa-2\theta-M_2^2\eta)s^{m-1}>0.
        \]
        Here the second inequality uses $d\le M_2e$ from
        switch-admissibility and $E_k(s,c)\le\eta s^{m-1}$ from
        \cref{cl:window-estimates}; the final inequality follows from
        $2\theta+M_2^2\eta<\kappa$, as ensured by the hierarchy.
        Hence $g(A^*)>g(A)$.
\end{proof}
This contradicts the choice of $A$.
Therefore, $U=\emptyset$.
This completes the proof of \cref{cl:defect-clean-center}.

\section*{Declaration on the use of AI}
The authors used generative AI for language polishing and to assist in the search for suitable parameter choices for the application of \cref{lem:p-ordered-local-blocker}.
The authors independently verified the parameter choices and all mathematical arguments and take full responsibility for the content of the manuscript.

\end{document}